\newtheorem{theorem}{Theorem}[section]
\newtheorem{defi}[theorem]{Definition}
\newtheorem{prop}[theorem]{Proposition}
\DeclareMathOperator{\im}{im}
\DeclareMathOperator{\sfl}{sf}
\DeclareMathOperator{\sgn}{sgn}
\DeclareMathOperator{\Hom}{Hom}
\DeclareMathOperator{\GL}{GL}
\DeclareMathOperator{\Mat}{Mat}
\title{Local and Global Bifurcation for Periodic Solutions of Hamiltonian Systems via Comparison Theory for the Spectral Flow}
\author{Joanna Janczewska, Maciej Starostka and Nils Waterstraat}
\begin{document}
\date{}
\maketitle

\footnotetext[1]{{\bf 2010 Mathematics Subject Classification: Primary 37J20; Secondary 37J45, 58J30}}

\begin{abstract}
We obtain local and global bifurcation for periodic solutions of Hamiltonian systems by using a new way to apply a comparison principle of the spectral flow that was originally introduced by Pejsachowicz in a joint work with the third author. A particular novelty is the study of global bifurcation, which to the best of our knowledge has not been done via the spectral flow.  
\end{abstract}

\section{Introduction}
We consider Hamiltonian systems

\begin{equation}\label{equation}
\left\{
\begin{aligned}
J u'(t)+\nabla_u&\mathcal{H}(\lambda,t,u(t))=0,\quad t\in [0,2\pi]\\
u(0)&=u(2\pi),
\end{aligned}
\right.
\end{equation}  
where $J$ denotes the standard symplectic matrix

\begin{align}\label{J}
J=\begin{pmatrix}
0&-I_n\\
I_n&0
\end{pmatrix}
\end{align}
and $\mathcal{H}:\mathbb{R}\times\mathbb{R}\times\mathbb{R}^{2n}\rightarrow\mathbb{R}$ is $C^2$. We assume that $\mathcal{H}(\lambda,t,u)$ is $2\pi$-periodic with respect to $t$, that $\nabla_u\mathcal{H}(\lambda,t,0)=0$ for all $(\lambda,t)\in\mathbb{R}\times\mathbb{R}$ and consider bifurcation of periodic solutions from the trivial one $u\equiv 0$. Such problems have been studied by various methods for several decades and are well presented, e.g., in the monographs \cite{Bartsch}, \cite{Mawhin} and \cite{Rabinowitz}. In 1999 Fitzpatrick, Pejsachowicz and Recht showed in \cite{Specflow} that the spectral flow, which is an invariant from global analysis, yields a novel and promising way to find bifurcation points of critical points of families of functionals. In their subsequent work \cite{SFLPejsachowiczII} they applied their findings to the Hamiltonian systems \eqref{equation}.\\
The spectral flow was invented by Atiyah, Patodi and Singer in 1976 for studying spectra of elliptic differential operators on manifolds in \cite{AtiyahPatodi} and quickly became a valuable invariant in various settings in analysis, geometry and mathematical physics \cite{book}, \cite{Fredholm}. Since the work \cite{Specflow} it has been applied to study bifurcation problems for different kinds of differential equations by various authors. Pejsachowicz and the third author invented in \cite{BifJac} a comparison principle that can be used to estimate the spectral flow, and which allows to find bifurcation points for \eqref{equation} just from the eigenvalues of the family of Hessians of the Hamiltonian $\mathcal{H}$ at $0\in\mathbb{R}^{2n}$.\\
The aim of this paper is threefold. Firstly, we give a rather short and novel proof of a bifurcation criterion for \eqref{equation} that is based on the comparison principle and was essentially already obtained by different methods in \cite{BifJac}. Secondly, we refine this new proof to find bifurcation of \eqref{equation} under far less restrictive assumptions on the eigenvalues of the family of Hessians of the Hamiltonian $\mathcal{H}$ at $0\in\mathbb{R}^{2n}$. Let us emphasize that we here need to make an additional assumption on $\mathcal{H}$, so that this second part is not a direct generalisation of the first one. Finally, we use the method from the previous part to consider global bifurcation of \eqref{equation}, which is of particular interest as an example of B\"ohme \cite{Boehme} indicates that there is little hope to find global bifurcation by variational methods. Actually, the possibility of a global bifurcation was not even mentioned in the original work \cite{Specflow} by Fitzpatrick, Pejsachowicz and Recht, and we are not aware of any reference in the literature except \cite{Benevieri}, where the authors explicitly state that the spectral flow seems to be unable to provide global bifurcation results. Our final aim is to show that this statement is despite B\"ohme's example too pessimistic. Here we recap Fitzpatrick and Pejsachowicz' construction of the parity, which can show bifurcation for non-linear operator equations in Banach spaces. In Hilbert spaces the parity is linked to the spectral flow and thus opens up the way to find global bifurcation points for critical points of families of functionals. We are not aware of any reference in the literature for the obtained Theorem \ref{thm-globalbif}, but we are sure that its statement is folklore. Finally, we combine this theorem with the comparison methods from the previous parts and obtain global bifurcation for planar systems \eqref{equation} just from the eigenvalues of the Hessians of the Hamiltonian $\mathcal{H}$ at $0\in\mathbb{R}^{2n}$.\\                         
The paper is structured as follows. In the next section we recall the variational formulation from \cite{Rabinowitz} to find solutions of \eqref{equation}, give the central definitions and state our main theorem. The following Section 3 deals with the proof of this theorem and is divided into four parts. Firstly, we recall the construction of the parity and its application to bifurcation for nonlinear operator equations in Banach spaces. Secondly, we recap the spectral flow and how it is used to find bifurcation points of critical points of families of functionals in Hilbert spaces. Section 3.3 joins the parity and the spectral flow in Proposition \ref{prop-parity} and yields Theorem \ref{thm-globalbif} on global bifurcation of branches of critical points. As already mentioned above, we are not aware of any reference in the literature for this theorem, but consider it as folklore. The final Section 3.4 deals with the proof of our main theorem, which consists of three parts.

\section{The Main Theorem}
Let $\mathcal{H}:\mathbb{R}\times\mathbb{R}\times\mathbb{R}^{2n}\rightarrow\mathbb{R}$ be $C^2$ and assume that $\mathcal{H}(\lambda,t,u)$ is $2\pi$-periodic with respect to $t$ and $\nabla_u\mathcal{H}(\lambda,t,0)=0$ for all $(\lambda,t)\in\mathbb{R}\times\mathbb{R}$. Let us consider as in \eqref{equation} the Hamiltonian systems

\begin{equation*}
\left\{
\begin{aligned}
J u'(t)+\nabla_u&\mathcal{H}(\lambda,t,u(t))=0,\quad t\in [0,2\pi]\\
u(0)&=u(2\pi),
\end{aligned}
\right.
\end{equation*}  
where $J$ denotes the standard symplectic matrix \eqref{J}, and note that under the given assumptions $u\equiv 0$ is a solution of \eqref{equation} for all $\lambda\in\mathbb{R}$.

\begin{defi}
Let $\lambda_-<\lambda_+$ be two real numbers. 

\begin{itemize}
 \item[(i)] There is a (local) bifurcation point $\lambda_0\in[\lambda_-,\lambda_+]$ for \eqref{equation}, if in every neighbourhood of $(\lambda_0,0)$ in $\mathbb{R}\times C^1(S^1,\mathbb{R}^{2n})$ there is an element $(\lambda,u)$ such that $u\neq 0$ is a solution of \eqref{equation}.
 \item[(ii)] Let $\mathcal{S}$ denote the closure of $$\{(\lambda,u)\in\mathbb{R}\times C^1(S^1,\mathbb{R}^{2n}):\, u\, \text{is a solution of}\, \eqref{equation}\}\setminus\mathbb{R}\times\{0\}$$ in $\mathbb{R}\times H^\frac{1}{2}(S^1,\mathbb{R}^{2n})$, and let $\mathcal{T}$ be the connected component of $\mathcal{S}\cup[\lambda_-,\lambda_+]\times\{0\}$ containing $[\lambda_-,\lambda_+]\times\{0\}$. Then there is a global bifurcation from the interval $[\lambda_-,\lambda_+]$ if either $\mathcal{T}$ is unbounded or $\mathcal{T}$ contains a point $(\lambda^\ast,0)$ such that $\lambda^\ast\notin[\lambda_-,\lambda_+]$.
\end{itemize}
\end{defi}
\noindent
Before stating the main theorem of this work, we now briefly recap the analytical setting for finding solutions of \eqref{equation} by variational methods, where we follow the presentation of \cite{SFLPejsachowiczII} and \cite{Rabinowitz}.\\
The Hilbert space $H^\frac{1}{2}(S^1,\mathbb{R}^{2n})$ consists of all functions $u:[0,2\pi]\rightarrow\mathbb{R}^{2n}$ such that
\begin{align*}
u(t)=c_0+\sum^\infty_{k=1}{(a_k\sin\,kt+b_k\cos\,kt)},
\end{align*}
where $c_0,a_k,b_k\in\mathbb{R}^{2n}$, $k\in\mathbb{N}$, and 
$\sum^\infty_{k=1}{k(|a_k|^2+|b_k|^2)}<\infty.$\\
The scalar product on $H^\frac{1}{2}(S^1,\mathbb{R}^{2n})$ is defined  by
\begin{align*}
\langle u,v\rangle_{H^\frac{1}{2}}=\langle c_0,\tilde{c}_0\rangle+\sum^\infty_{k=1}{k(\langle a_k,\tilde{a}_k\rangle+\langle b_k,\tilde{b}_k\rangle)},
\end{align*}
where $\tilde{c}_0$ and $\tilde{a}_k,\tilde{b}_k$ denote the Fourier coefficients of $v\in H^\frac{1}{2}(S^1,\mathbb{R}^{2n})$. \\
Let
\begin{align}\label{Gamma}
Q:H^\frac{1}{2}(S^1,\mathbb{R}^{2n})\times H^\frac{1}{2}(S^1,\mathbb{R}^{2n})\rightarrow\mathbb{R}
\end{align}
be the unique continuous extension of the bounded bilinear form
\[\tilde{Q}(u,v)=\int^{2\pi}_0{\langle J u'(t),v(t)\rangle\,dt}\]
defined on the dense subspace $H^1(S^1,\mathbb{R}^{2n})$ of $H^\frac{1}{2}(S^1,\mathbb{R}^{2n})$, which consists of all absolutely continuous functions $u:S^1\rightarrow\mathbb{R}^{2n}$ having a square integrable derivative. We consider the map
\[\psi:\mathbb{R}\times H^\frac{1}{2}(S^1,\mathbb{R}^{2n})\rightarrow\mathbb{R},\quad \psi_\lambda(u)=\frac{1}{2}\,Q(u,u)+\int^{2\pi}_0{\mathcal{H}(\lambda,t,u(t))\,dt}.\] 
It is a standard result (see \cite{Rabinowitz}, \cite{book}) that $\psi$ is $C^2$ if there are constants $a,b\geq 0$ and $r>1$ such that
	 \begin{align*}
\begin{split}
|\nabla_u\mathcal{H}(\lambda,t,u)|&\leq a+b|u|^r,\\
|D_u\nabla_u\mathcal{H}(\lambda,t,u)|&\leq a+b|u|^r,\quad (\lambda,t,u)\in  \mathbb{R}\times\mathbb{R}\times\mathbb{R}^{2n}.
\end{split}
\end{align*}
Moreover, 
\[\langle \nabla_u\psi_\lambda  u,v\rangle_{H^\frac{1}{2}}=Q(u,v)+\int^{2\pi}_0{\langle\nabla_u\mathcal{H}(\lambda,t,u(t)),v(t)\rangle\,dt},\quad v\in H^\frac{1}{2}(S^1,\mathbb{R}^{2n}),\]
and consequently, the critical points of $\psi_\lambda$ are the weak solutions of the Hamiltonian system \eqref{equation}. In particular,$ \ u\equiv 0\in H^\frac{1}{2}(S^1,\mathbb{R}^{2n})$ is a critical point of  $\psi_\lambda,$   whose Hessian is given by
\[\langle L_\lambda u,v\rangle_{H^\frac{1}{2}}=Q(u,v)+\int^{2\pi}_0{\langle A_\lambda(t) u(t),v(t)\rangle\,dt},\]
where $A_\lambda(t)$ is the Hessian matrix of $\mathcal{H}_\lambda(t,\cdot):\mathbb{R}^{2n}\rightarrow\mathbb{R}$ at $0$. Note that the kernel of $L_\lambda$ is made by the solutions of the linear equation

\begin{equation}\label{equationlin}
\left\{
\begin{aligned}
J u'(t)&+A_\lambda(t)u(t)=0,\quad t\in [0,2\pi]\\
u(0)&=u(2\pi).
\end{aligned}
\right.
\end{equation}  
We now let $\mu_1(\lambda,t)\leq\cdots\leq\mu_{2n}(\lambda,t)$ be the eigenvalues of the matrices $A_\lambda(t)$ for $\lambda\in\mathbb{R}$ and $t\in[0,2\pi]$. For two fixed real numbers $\lambda_-<\lambda_+$ we set

\begin{align}\label{alpha}
\alpha_\pm:=\inf_{t\in[0,2\pi]}{\mu_1(\lambda_\pm,t)}
\end{align}
as well as

\begin{align}\label{beta}
\beta_\pm:=\sup_{t\in[0,2\pi]}{\mu_{2n}(\lambda_\pm,t)}.
\end{align}
Finally, we define for any real numbers $\mu,\nu\in\mathbb{R}$

\[\Delta(\mu,\nu)=\begin{cases}
|\{i\in\mathbb{Z}:\, \mu< i\leq\nu\}|,\quad\text{if}\, \mu\leq\nu\\
-|\{i\in\mathbb{Z}:\, \nu\leq i<\mu\}|,\quad\text{if}\, \nu\leq\mu,
\end{cases}\]
where $|\{\cdot\}|$ stands for the cardinality of a set. Now our main theorem is as follows. 
\begin{theorem}\label{thm-main}
Let $\mathcal{H}:\mathbb{R}\times\mathbb{R}\times\mathbb{R}^{2n}\rightarrow\mathbb{R}$ be a family of Hamiltonians as above and $\alpha_\pm,\beta_\pm$ as in \eqref{alpha} and \eqref{beta}. Assume that \eqref{equationlin} only has the trivial solution for $\lambda=\lambda_\pm$.

\begin{enumerate}
 \item[(i)] If $\beta_-<\alpha_+$ and $\Delta(\beta_-,\alpha_+)> 0$, or $\beta_+<\alpha_-$ and $ \Delta(\alpha_-,\beta_+)< 0,$ then there is a (local) bifurcation point of \eqref{equation} in $(\lambda_-,\lambda_+)$.
 \item[(ii)] If $\alpha_-<0<\beta_-$, $A_{\lambda_-}$ is autonomous, and either $\beta_-<\alpha_+$ or $\beta_+<\alpha_-$, then there is a (local) bifurcation point of \eqref{equation} in $(\lambda_-,\lambda_+)$.
 \item[(iii)] If $n=1$ in $(ii)$, i.e. \eqref{equation} is planar, and 
 
 \begin{align}\label{Delta}
 \Delta(\alpha_-,\beta_+)-\Delta(\beta_-,\alpha_+)=1,
 \end{align}
then there is a global bifurcation of \eqref{equation} from the interval $[\lambda_-,\lambda_+]$.  
\end{enumerate}
\end{theorem}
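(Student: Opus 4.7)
The plan is to reduce all three parts to statements about the spectral flow $\sfl(L_\lambda,\lambda\in[\lambda_-,\lambda_+])$ of the path of Hessians: non-vanishing will force a local bifurcation for parts (i) and (ii), while odd $\sfl$ together with Theorem \ref{thm-globalbif} will deliver the global conclusion in (iii). The key technical tool is the comparison principle of \cite{BifJac}: by sandwiching $A_\lambda(t)$ between constant scalar matrices $\mu\,I$, the computation of $\sfl(L_\lambda)$ will be reduced to the spectral flow of model paths $J\frac{d}{dt}+\mu\,I$, whose kernels are non-trivial exactly when $\mu\in\mathbb{Z}$, and this is what makes the counting function $\Delta$ appear.

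For (i) the assumption $\beta_-<\alpha_+$ gives $A_{\lambda_-}(t)\leq\beta_- I$ and $A_{\lambda_+}(t)\geq\alpha_+ I$ uniformly in $t$, so a short homotopy argument will identify $\sfl(L_\lambda)$ with the spectral flow of the constant-coefficient path from $\beta_- I$ to $\alpha_+ I$, which evaluates (up to the chosen orientation of $\sfl$) to $2n\,\Delta(\beta_-,\alpha_+)$. The hypothesis $\Delta(\beta_-,\alpha_+)>0$ then gives $\sfl(L_\lambda)\neq 0$, and the symmetric case $\beta_+<\alpha_-$ is analogous. For (ii) the integer-count hypothesis is dropped and the sandwich above can degenerate to zero; to compensate I would exploit the autonomous assumption on $A_{\lambda_-}$: since $A_{\lambda_-}$ is constant in $t$, the operator $L_{\lambda_-}$ diagonalises in a Fourier-type basis and its spectrum can be read off from that of the symplectic matrix generated by $JA_{\lambda_-}$. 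The sign condition $\alpha_-<0<\beta_-$ then produces eigenvalues of $L_\lambda$ that are forced to cross zero when $A_\lambda$ is deformed into the region where the one-sided separation $\beta_-<\alpha_+$ (resp.\ $\beta_+<\alpha_-$) holds, yielding a non-trivial $\sfl(L_\lambda)$.

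For (iii) I would specialise to $n=1$ and invoke Theorem \ref{thm-globalbif}: global bifurcation from $[\lambda_-,\lambda_+]$ follows as soon as $\sfl(L_\lambda)$ is odd, because by Proposition \ref{prop-parity} this is equivalent to a parity jump of the linearisation between the two endpoints. Applying the comparison principle to the planar path $A_\lambda(t)$, sandwiched at each endpoint between the extremal scalar matrices coming from $\mu_1$ and $\mu_2$, produces two-sided bounds on $\sfl(L_\lambda)$ expressed through $\Delta(\beta_-,\alpha_+)$ and $\Delta(\alpha_-,\beta_+)$; condition \eqref{Delta} is then exactly designed so that, combined with the contribution identified in (ii), the parity of $\sfl(L_\lambda)$ is forced to be odd, and Theorem \ref{thm-globalbif} applies.

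The hardest step will be (ii): carefully bookkeeping the spectral flow contribution from the neighbourhood of $\lambda_-$ where the autonomous structure is used, against the comparison contribution over the remainder of $[\lambda_-,\lambda_+]$, and arranging sign conventions so that the two contributions reinforce rather than cancel. Once (ii) is set up, part (iii) becomes largely a parity reduction combined with a direct appeal to Theorem \ref{thm-globalbif}, and part (i) is the cleanest direct application of the comparison principle.
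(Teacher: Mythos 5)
Your overall architecture --- sandwiching $A_\lambda(t)$ by constant scalar matrices, computing the spectral flow of the model paths through their integer crossings, and passing to global bifurcation via Proposition \ref{prop-parity} and Theorem \ref{thm-globalbif} --- is the same as the paper's. However, two steps in the plan as written do not close. First, the comparison principle yields only \emph{one-sided inequalities}, never an identification: from $A_{\lambda_-}(t)\le\beta_-I_{2n}$ and $A_{\lambda_+}(t)\ge\alpha_+I_{2n}$ one gets $\sfl(L,[\lambda_-,\lambda_+])\ge\sfl(M,[\lambda_-,\lambda_+])=2n\Delta(\beta_-,\alpha_+)$, not equality, and no homotopy can "identify" the two spectral flows since the endpoints of the paths differ and invertibility along such a homotopy is unavailable. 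For (i) the inequality suffices, so this is only a misstatement there; but it is fatal for (iii) as you set it up: the two-sided \emph{non-strict} bounds $2\Delta(\beta_-,\alpha_+)\le\sfl(L)\le 2\Delta(\alpha_-,\beta_+)=2\Delta(\beta_-,\alpha_+)+2$ leave three admissible values, two of which are even, so \eqref{Delta} alone does not force oddness. What is needed --- and what the paper proves --- is that \emph{both} inequalities are strict, and the strictness is precisely the content of part (ii).

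Second, (ii) is exactly the step you declare open ("arranging sign conventions so that the two contributions reinforce rather than cancel"), and it does not resolve itself by bookkeeping. The paper's mechanism is: deform $A_\lambda(t)$ to $B_\lambda$ over the square $[0,1]\times[\lambda_-,\lambda_+]$, so that $\sfl(L)=\sfl(h(\cdot,\lambda_-))+\sfl(M)-\sfl(h(\cdot,\lambda_+))$; the last two terms are nonnegative because the corresponding paths are monotone (Proposition \ref{prop-pos}); and the first term is strictly positive because (a) the crossing form along the straight line from $A_{\lambda_-}$ to $\beta_-I_{2n}$ is positive semi-definite by the very definition of $\beta_-$, and (b) since $A_{\lambda_-}$ is autonomous and $\alpha_-<0<\beta_-$, some convex combination $(1-s)A_{\lambda_-}+s\beta_-I_{2n}$ is a singular matrix, which places a \emph{constant} solution in the kernel of the associated operator and forces at least one (nonnegative, hence after a small positive perturbation regular and positive) crossing. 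Your alternative of reading the spectrum of $L_{\lambda_-}$ off the symplectic matrix generated by $JA_{\lambda_-}$ is not what is required and is not carried out; without the semi-definiteness of the crossing form and the monotonicity lemma, the sign control on which both (ii) and the strictness needed in (iii) depend is missing.
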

\noindent
Let us emphasise that a proof of $(i)$ is implicitly contained in \cite[\S 8]{BifJac}, albeit here we use a different argument. Nevertheless, the essential novelties of our work are $(ii)$ and $(iii)$. Note that $(ii)$ allows the gap between $\beta_\mp$ and $\alpha_\pm$ to be arbitrarily small under additional assumptions on $A_{\lambda_-}$, whereas the gap in $(i)$ might need to be rather large to find bifurcation. To the best of our knowledge, $(iii)$ is the first result that shows global bifurcation by a spectral flow argument.

\section{Proof of Theorem \ref{thm-main}}
As announced in the introduction, the proof of $(iii)$ requires various preliminaries which however yield an abstract theorem on global bifurcation that is of independent interest. Thus we firstly recap the parity and the spectral flow as well as their relevance in bifurcation theory in the next two subsection. Afterwards we join these two notions and introduce an abstract global bifurcation theorem. In the fourth subsection we finally consider the Hamiltonian systems \eqref{equation} and show Theorem \ref{thm-main}.

\subsection{The Parity and Global Bifurcation}\label{sec-parity}
Let us first recap the definition of the parity, where we mostly follow \cite{Memoirs}. Let $E$ be a real Banach space of infinite dimension and let $\Phi_0(E)$ denote the open subset of $\mathcal{L}(E)$ of all Fredholm operators of index $0$ with the norm topology. Let $L:[a,b]\rightarrow\Phi_0(E)$ be a path such that $L_a$ and $L_b$ are invertible. Fitzpatrick and Pejsachowicz showed in \cite{Mike} that there is a \textit{parametrix} for $L$, i.e., a path $M:[a,b]\rightarrow\GL(E)$ such that $M_\lambda L_\lambda=I_E+K_\lambda$ for some path $K:[a,b]\rightarrow\mathcal{K}(E)$ of compact operators. As $I_E+K_\lambda$ are invertible for $\lambda=a,b$, the Leray-Schauder degree of these operators is defined and given by

\begin{align}\label{LSdeg}
\deg_{LS}(I_E+K_\lambda)=(-1)^{k(\lambda)},
\end{align}   
where $k(\lambda)$ denotes the algebraic multiplicity of the number of eigenvalues less than $-1$ of $K_\lambda$. We now define $\sigma(L,[a,b])$ as the unique element in $\mathbb{Z}_2=\{0,1\}$ such that

\[\deg_{LS}(I_E+K_a)\deg_{LS}(I_E+K_b)=(-1)^{\sigma(L,[a,b])}.\]
It was proved in \cite{Mike} that this element of $\mathbb{Z}_2$ does not depend on the choice of the parametrix $M$ and Fitzpatrick and Pejsachowicz introduced $\sigma$ as the parity of the path $L$ in the latter reference. Moreover, they showed that the parity has the following properties:

\begin{itemize}
 \item[(P1)] If $L_\lambda\in\GL(E)$ for all $\lambda\in[a,b]$, then $\sigma(L,[a,b])=0$.
 \item[(P2)] If $c\in[a,b]$ such that $L_c\in\GL(E)$, then $\sigma(L,[a,b])=\sigma(L,[a,c])+\sigma(L,[c,b])$.
 \item[(P3)] If $h:[a,b]\times[0,1]\rightarrow\Phi_0(E)$ is a homotopy such that $h(a,s), h(b,s)\in\GL(E)$ for all $s\in[0,1]$, then 
 
 \[\sigma(h(\cdot,0),[a,b])=\sigma(h(\cdot,1),[a,b]).\]
 \item[(P4)] If $E=E_1\oplus E_2$ for two closed subspaces of $E$ such that $L_\lambda(E_i)\subset E_i$ for all $\lambda\in[a,b]$, $i=1,2$, then
 
 \[\sigma(L,[a,b])=\sigma(L\mid_{E_1},[a,b])+\sigma(L\mid_{E_2},[a,b]).\]
 \end{itemize}
Note that by (P2) and (P3) the parity induces a homomorphism $\sigma:\pi_1(\Phi_0(E))\rightarrow\mathbb{Z}_2$, which is bijective under an additional assumption:
 
\begin{itemize}
 \item[(P5)] If $\GL(E)$ is contractible with respect to the norm topology, then 
 
 \[\sigma:\pi_1(\Phi_0(E))\rightarrow\mathbb{Z}_2\]
 is an isomorphism. 
\end{itemize}
Consequently, by Kuiper's Theorem, $\sigma$ in particular induces an isomorphism if $E$ is a Hilbert space.\\
The main motivation for introducing the parity is bifurcation theory of nonlinear operator equations. Below we will make use of the following theorem that basically can already be found in \cite{Mike}, but which in this generality firstly appeared in \cite{Rabier}.

\begin{theorem}\label{Rabier}
Let $X,Y$ be real Banach spaces and $G:\mathbb{R}\times X\rightarrow Y$ a $C^1$-map with $G(\lambda,0)=0$ for all $\lambda\in\mathbb{R}$. Suppose that the derivatives $D_0G_\lambda$ of $G_\lambda:X\rightarrow Y$ at $0\in X$ are Fredholm of index $0$, and that there are $\lambda_-<\lambda_+$ such that $D_0G_{\lambda_\pm}\in\GL(X,Y)$ as well as $\sigma(D_0G_\cdot,[\lambda_-,\lambda_+])=1$.\\
Let $\mathcal{S}$ denote the closure of $G^{-1}(0)\setminus\mathbb{R}\times\{0\}$ in $\mathbb{R}\times X$ and $\mathcal{T}$ the connected component of $\mathcal{S}\cup[\lambda_-,\lambda_+]\times\{0\}$ containing $[\lambda_-,\lambda_+]\times\{0\}$. Then either $\mathcal{T}$ is non-compact or $\mathcal{T}$ contains a point $(\lambda^\ast,0)$ with $\lambda^\ast\notin[\lambda_-,\lambda_+]$.   
\end{theorem}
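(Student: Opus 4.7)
The plan is to mimic Rabinowitz's classical global bifurcation argument, replacing the Leray-Schauder degree by a degree theory for $C^1$ Fredholm maps of index $0$.

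First I would translate the parity hypothesis into a change of degree. By \cite{Mike} there is a parametrix $M:[\lambda_-,\lambda_+]\to\GL(Y,X)$ with $M_\lambda D_0G_\lambda=I_X+K_\lambda$ and $K_\lambda\in\mathcal{K}(X)$. The assumption $\sigma(D_0G_\cdot,[\lambda_-,\lambda_+])=1$ combined with \eqref{LSdeg} yields
\[\deg_{LS}(I_X+K_{\lambda_-})\cdot\deg_{LS}(I_X+K_{\lambda_+})=-1,\]
so these two Leray-Schauder degrees have opposite signs. Since $D_0G_{\lambda_\pm}\in\GL(X,Y)$, the implicit function theorem provides a ball $B_r(0)\subset X$ on which $u=0$ is the unique zero of $G_{\lambda_\pm}$. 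Invoking the Pejsachowicz-Rabier degree for $C^1$ Fredholm maps (or, equivalently, performing a Lyapunov-Schmidt reduction to the classical Brouwer setting on a finite-dimensional slice) one obtains
\[\deg(G_{\lambda_\pm},B_r,0)=\deg_{LS}(I_X+K_{\lambda_\pm}),\]
so that the local indices of $G$ at the two endpoints disagree in sign.

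Now suppose, for contradiction, that $\mathcal{T}$ is compact and contained in $(\lambda_-,\lambda_+)\times X$. By the Whyburn-Kuratowski separation lemma (see \cite{Memoirs} or \cite{Rabier}) one can find an open bounded neighbourhood $\Omega\subset\mathbb{R}\times X$ of $\mathcal{T}$ whose boundary does not meet $\mathcal{S}\cup[\lambda_-,\lambda_+]\times\{0\}$. After shrinking $\Omega$ near the trivial branch one may additionally arrange that the slice $\Omega_\lambda:=\{u:(\lambda,u)\in\Omega\}$ contains $0$ exactly for $\lambda\in[\lambda_-,\lambda_+]$ and is empty for $\lambda$ outside a slightly enlarged interval. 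Then the Pejsachowicz-Rabier degree $\deg(G_\lambda,\Omega_\lambda,0)$ is well defined, and by homotopy invariance it is independent of $\lambda$. At $\lambda=\lambda_\pm$ excision together with the local computation of the previous step identifies it with $\deg_{LS}(I_X+K_{\lambda_\pm})$, contradicting the opposite signs established above.

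The main obstacle is the topological separation step, namely the construction of the isolating neighbourhood $\Omega$ with the correct behaviour across the endpoints $\lambda_\pm$, so that its slices contain the trivial solution precisely on $[\lambda_-,\lambda_+]$. This is where the standing hypothesis that $\mathcal{T}$ is compact enters decisively, through a standard Whyburn-type argument on the compact set $\mathcal{T}\cup(\mathcal{S}\cap\overline{U})$ in a small bounded neighbourhood $U$ of $\mathcal{T}$. The degree-theoretic machinery itself is standard once the Pejsachowicz-Rabier framework is invoked, and only its basic properties (additivity, excision, homotopy invariance, and agreement with $\deg_{LS}$ on compact perturbations of the identity) are needed.
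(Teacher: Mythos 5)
First, a remark on the comparison: the paper does not prove Theorem \ref{Rabier} at all --- it is quoted as known from \cite{Mike} and \cite{Rabier} --- so your proposal has to be measured against the argument in those references. Your overall strategy (turn the parity hypothesis into a disagreement of local indices at $\lambda_\pm$, isolate the putatively compact $\mathcal{T}$ by a Whyburn-type neighbourhood $\Omega$, and contradict this with the degree for $C^1$-Fredholm maps of index $0$) is indeed the right one. But the degree bookkeeping in your third step contains a genuine error. Since $\Omega$ is open, bounded and contains $[\lambda_-,\lambda_+]\times\{0\}$, its boundary \emph{must} meet the trivial branch $\mathbb{R}\times\{0\}$, which consists of zeros of $G$; hence $G^{-1}(0)\cap\partial\Omega\neq\emptyset$ and the homotopy invariance you invoke to conclude that $d(\lambda):=\deg(G_\lambda,\Omega_\lambda,0)$ is independent of $\lambda$ does not apply across the two parameter values where $\partial\Omega$ crosses the trivial branch. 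Taken literally, global constancy of $d$ would give $d\equiv 0$ (the slices $\Omega_\lambda$ are empty for large $|\lambda|$), and your argument would ``prove'' that bifurcation never occurs. In addition, the identification $\deg(G_{\lambda_\pm},\Omega_{\lambda_\pm},0)=\deg_{LS}(I_X+K_{\lambda_\pm})$ by excision is unjustified: $\Omega_{\lambda_\pm}$ may well contain nontrivial zeros of $G_{\lambda_\pm}$ (points of $\mathcal{T}\setminus\mathbb{R}\times\{0\}$ lying over $\lambda_\pm$), and excision cannot discard them.

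The argument that actually closes is the Rabinowitz-type jump computation. Arrange $\Omega\cap(\mathbb{R}\times\{0\})=(\lambda_--\varepsilon,\lambda_++\varepsilon)\times\{0\}$, with $\varepsilon$ and $r$ so small that by the implicit function theorem the only zeros of $G$ in $[\lambda_+,\lambda_++2\varepsilon]\times\overline{B_r}$ and $[\lambda_--2\varepsilon,\lambda_-]\times\overline{B_r}$ are the trivial ones. Then $d$ is defined and locally constant away from the two crossing values $\lambda_-\!-\varepsilon$ and $\lambda_+\!+\varepsilon$, and vanishes outside the projection of $\overline{\Omega}$. Using additivity one shows that $\deg(G_\lambda,\Omega_\lambda\setminus\overline{B_r},0)$ is constant across each crossing, hence equal to its value on the far-out empty slices, namely $0$; this identifies $d(\lambda)$ for $\lambda$ just inside each crossing with the local index of the trivial zero there, and homotopy invariance of the (base-point) degree between these two parameter values forces the two local indices to agree --- contradicting $\sigma(D_0G_\cdot,[\lambda_-,\lambda_+])=1$, which says precisely that they differ by a factor $-1$. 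Two further points deserve explicit mention: the compactness of $\mathcal{S}\cap\overline{U}$ needed to run Whyburn's lemma is not free in this setting but follows from the local properness of $C^1$-Fredholm maps; and the individual numbers $\deg_{LS}(I_X+K_{\lambda_\pm})$ depend on the parametrix (only their product, the parity, is intrinsic), so ``opposite signs'' must be phrased through the base-point dependent Pejsachowicz--Rabier degree rather than through \eqref{LSdeg} alone.
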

\noindent
Note that if $G$ is proper on any bounded subset of $\mathbb{R}\times X$, then the non-compactness of $C$ implies that $C$ is unbounded in $\mathbb{R}\times X$. 

\subsection{The Spectral Flow and Bifurcation of Critical Points}
Let us now consider a real separable Hilbert space $H$ and denote by $\Phi_S(H)$ the subset of $\mathcal{L}(H)$ of all selfadjoint Fredholm operators with the norm topology. We follow the definition of the spectral flow from \cite{Phillips} (see also \cite[\S 4.1]{book}) and note at first that if $0$ is in the spectrum of a selfadjoint Fredholm operator, then it is an isolated eigenvalue of finite multiplicity. This allows to find a partition $a=\lambda_0<\lambda_1<\cdots<\lambda_N=b$ and numbers $a_i>0$, $i=1,\ldots,N$ such that the spectral projection $\chi_{[-a_i,a_i]}(L_\lambda)$ has constant finite rank for $\lambda\in[\lambda_{i-1},\lambda_i]$ and $i=1,\ldots N$. Then the spectral flow of $L:[a,b]\rightarrow\Phi_S(H)$ is defined by

\begin{align}\label{def-sfl}
\sfl(L,[a,b])=\sum^N_{i=1}(\dim\im(\chi_{[0,a_i]}(L_{\lambda_i})))-\dim\im(\chi_{[0,a_i]}(L_{\lambda_{i-1}})))\in\mathbb{Z},
\end{align}
and it was shown by Phillips in \cite{Phillips} that it neither depends on the choice of the numbers $\lambda_i$ nor the $a_i$. The spectral flow has the following properties:

\begin{enumerate}
 \item[(S1)] If $L_\lambda\in\GL(H)\cap\Phi_S(H)$, then $\sfl(L,[a,b])=0$.
 \item[(S2)] If $c\in[a,b]$, then $\sfl(L,[a,b])=\sfl(L,[a,c])+\sfl(L,[c,b])$.
 \item[(S3)] If $h:[a,b]\times[0,1]\rightarrow\Phi_S(H)$ is such that $h(a,s)$, $h(b,s)$ are constant for all $s\in[0,1]$, then
 
 \[\sfl(h(\cdot,0))=\sfl(h(\cdot,1)).\]
 If $h(a,s)$, $h(b,s)$ are non-constant, then the homotopy invariance also holds as long as $h(a,s)$, $h(b,s)$ are invertible for all $s\in[0,1]$ (cf. \cite{CompSfl}).
 \item[(S4)] If $H=H_1\oplus H_2$ for two closed subspaces of $H$ that reduce $L_\lambda$, $\lambda\in[a,b]$, then
 
 \[\sfl(L,[a,b])=\sfl(L\mid_{H_1},[a,b])+\sfl(L\mid_{H_2},[a,b]).\]
 \end{enumerate}
Atiyah and Singer showed in \cite{AtiyahSinger} that $\Phi_S(H)$ consists of the three path components

\begin{align*}
\Phi^+_S(H)&=\{T\in\Phi_S(H):\,\sigma_{ess}(T)\subset(0,\infty)\},\\
\Phi^-_S(H)&=\{T\in\Phi_S(H):\,\sigma_{ess}(T)\subset(-\infty,0)\},\\
\Phi^i_S(H)&=\Phi_S(T)\setminus(\Phi^+_S(H)\cup\Phi^-_S(H)),
\end{align*}
where $\sigma_{ess}(T)=\{\lambda\in\mathbb{R}:\, \lambda-T\notin\Phi_S(H)\}$ denotes the essential spectrum of $T\in\Phi_S(H)$. The components $\Phi^\pm_S(H)$ are contractible as topological spaces and on them the spectral flow of a path $L=\{L_\lambda\}_{\lambda\in[a,b]}$ is given by

\begin{align}\label{MorseRel}
\begin{split}
\sfl(L)&=\mu_{Morse}(L_a)-\mu_{Morse}(L_b)\,\,\,\text{if}\,\,\, L_\lambda\in\Phi^+_S(H),\\
\sfl(L)&=\mu_{Morse}(-L_b)-\mu_{Morse}(-L_a)\,\,\,\text{if}\,\,\, L_\lambda\in\Phi^-_S(H).
\end{split} 
\end{align}
A proof of these formulas can be found, e.g., in \cite[Prop. 4.3.1]{book} or \cite{CompSfl}. By the properties (S2) and (S3), the spectral flow induces a homomorphism between $\pi_1(\Phi^i_S(H))$ and the integers, which is actually an isomorphism by \cite{AtiyahPatodi}:   

\begin{itemize}
 \item[(S5)] The spectral flow induces an isomorphism
 \[\sfl:\pi_1(\Phi^i_S(H))\rightarrow\mathbb{Z}.\]
\end{itemize}
Pejsachowicz, Fitzpatrick and Recht showed in \cite{Specflow} that the spectral flow can be used to find bifurcation of critical points from a trivial branch for one-parameter families of functionals. Their main theorem is as follows.

\begin{theorem}\label{thm-bifJac}
Let $f:[a,b]\times H\rightarrow\mathbb{R}$ be a family of functionals such that every $f_\lambda:H\rightarrow\mathbb{R}$ is $C^2$ and its first and second derivatives depend continuously on $(\lambda,u)\in [a,b]\times H$. Assume that $(\nabla f_\lambda)(0)=0$, $\lambda\in [a,b]$ and that the Hessians $L_\lambda:=D^2_0f_\lambda$ at the critical point $0\in H$ are all Fredholm operators. If $L_a,L_b\in\GL(H)$ and

\[\sfl(L,[a,b])\neq 0,\]
then there is a bifurcation of critical points from the trivial branch $[a,b]\times\{0\}\subset [a,b]\times H$, i.e., there is some $\lambda^\ast\in(a,b)$ such that in every neighbourhood $U$ of $(\lambda^\ast,0)\in [a,b]\times H$, there is some $(\lambda,u)$ such that $u\neq 0$ is a critical point of $f_\lambda$.     
\end{theorem}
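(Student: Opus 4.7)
The plan is a proof by contradiction combining a Lyapunov--Schmidt reduction with finite-dimensional Morse theory. Suppose there is no bifurcation, so some $\delta > 0$ satisfies $\nabla f_\lambda(u) \neq 0$ for all $\lambda \in [a,b]$ and $0 < \|u\|_H < \delta$. The goal is to deduce $\sfl(L, [a,b]) = 0$, contradicting the assumption.

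Using the very definition of the spectral flow I would pick a partition $a = \lambda_0 < \lambda_1 < \cdots < \lambda_N = b$ and numbers $a_i > 0$ so that the spectral projections $P_i(\lambda) := \chi_{[-a_i, a_i]}(L_\lambda)$ have constant finite rank on each $[\lambda_{i-1}, \lambda_i]$. By additivity (S2) it suffices to treat one such subinterval. On it $L_\lambda$ splits orthogonally as $L_\lambda|_{\im P_i(\lambda)} \oplus L_\lambda|_{\ker P_i(\lambda)}$ with the second summand uniformly invertible, so (S1) and (S4) push all the spectral flow into the finite-dimensional summand. Continuity of $P_i(\lambda)$ in $\lambda$ permits a parallel-transport identification of the varying subspaces $\im P_i(\lambda)$ with a fixed finite-dimensional reference space $V$.

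The core step is a Lyapunov--Schmidt reduction of $\nabla f_\lambda(u) = 0$ along this splitting. Applying the implicit function theorem to the $\ker P_i(\lambda)$-component (whose derivative at $0$ is the uniformly invertible piece $L_\lambda|_{\ker P_i(\lambda)}$) produces a $C^1$ map $w_\lambda : B_V(0,\epsilon) \to \ker P_i(\lambda)$ with $w_\lambda(0) = 0$ and $D_0 w_\lambda = 0$, such that near $(\lambda, 0)$ the equation $\nabla f_\lambda(u) = 0$ is equivalent to $\nabla \tilde f_\lambda(v) = 0$, where $\tilde f_\lambda(v) := f_\lambda(v + w_\lambda(v))$. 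A direct chain-rule computation using $D_0 w_\lambda = 0$ yields $D^2_0 \tilde f_\lambda = L_\lambda|_{\im P_i(\lambda)}$, giving, under the trivialisation, a continuous path $\tilde L_\lambda$ of selfadjoint operators on $V$ with $\sfl(\tilde L, [\lambda_{i-1}, \lambda_i]) = \sfl(L, [\lambda_{i-1}, \lambda_i])$.

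On the finite-dimensional space $V$ formula \eqref{MorseRel} applies (after an admissible small perturbation of $\tilde L$ at its endpoints to ensure invertibility there, justified by homotopy invariance (S3)) and yields $\sfl(\tilde L, [\lambda_{i-1}, \lambda_i]) = \mu_{Morse}(\tilde L_{\lambda_{i-1}}) - \mu_{Morse}(\tilde L_{\lambda_i})$. The no-bifurcation hypothesis forces $0$ to be the only critical point of $\tilde f_\lambda$ in $B_V(0, \epsilon)$ for every $\lambda$, so the local critical groups $C_\ast(\tilde f_\lambda, 0)$ are invariant in $\lambda$. Since the Morse index pins down these groups at nondegenerate critical points, any nonzero difference of Morse indices would force incompatible critical groups at the endpoints, a contradiction. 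The main obstacle I expect is the Lyapunov--Schmidt bookkeeping: securing $\epsilon$ uniformly in $\lambda$, verifying the Hessian formula for $\tilde f_\lambda$, and controlling the parallel-transport trivialisation so that all constructions depend continuously on $\lambda$. The Morse-theoretic conclusion is standard once these analytic ingredients are in place.
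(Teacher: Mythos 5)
The paper does not actually prove Theorem \ref{thm-bifJac}; it is quoted from Fitzpatrick, Pejsachowicz and Recht \cite{Specflow}, so there is no internal proof to compare with. Your overall strategy (contradiction, parametrized finite--dimensional reduction along the spectral decomposition, invariance of the critical groups of the uniformly isolated critical point $0$) is in fact the strategy of the original proof, and the Lyapunov--Schmidt part of your sketch (uniform $\epsilon$ by compactness, $D^2_0\tilde f_\lambda=L_\lambda|_{\im P_i(\lambda)}$, continuous trivialisation) is sound. The gap is in the global assembly.

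The step ``by (S2) it suffices to treat one subinterval'' is not a valid reduction: under the no--bifurcation hypothesis only the \emph{total} spectral flow has to vanish, while the individual summands in \eqref{def-sfl} are generically nonzero because the partition points $\lambda_i$ are chosen for spectral--projection reasons and will in general be degenerate. Concretely, let $f_\lambda(x,y)=\tfrac{\lambda}{2}(x^2-y^2)+x^3y+xy^3$ on $\mathbb{R}^2$ (stabilised by a fixed invertible operator on an infinite--dimensional complement). Multiplying the two components of $\nabla f_\lambda(x,y)=0$ by $y$ and $x$ and adding gives $x^4+6x^2y^2+y^4=0$, so $0$ is the \emph{only} critical point for every $\lambda$ and there is no bifurcation anywhere; yet $L_\lambda=\diag(\lambda,-\lambda)$ satisfies $\sfl(L,[-\varepsilon,0])=1$ and $\sfl(L,[0,\varepsilon])=-1$ with the convention \eqref{def-sfl}. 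So the statement you would need on each subinterval is simply false. The same example exposes the two related defects: at a degenerate partition point the critical groups of $\tilde f_{\lambda_i}$ are \emph{not} pinned down by a Morse index, and your ``admissible small perturbation of $\tilde L$ at its endpoints, justified by (S3)'' is not admissible --- (S3) requires the endpoints to remain invertible (or fixed) throughout the homotopy, and perturbing a non--invertible endpoint by $\pm\delta I$ genuinely changes the spectral flow of that subpath, precisely because $\chi_{[0,a_i]}$ counts the kernel at $\lambda_i$. What the argument actually requires is to chain the critical groups across the \emph{whole} interval: at each partition point one must compare the reduced functionals coming from the two adjacent subintervals, which live on different finite--dimensional spaces, via the shifting theorem, and verify that the resulting degree shifts telescope exactly to $\sfl(L,[a,b])$, so that the nondegenerate data at $a$ and $b$ alone produce the contradiction. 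That bookkeeping is the real content of the proof in \cite{Specflow} and is missing from your sketch.
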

\noindent
Note that by \eqref{MorseRel} the classical theorem which shows that a change in the Morse index yields bifurcation if the latter are defined, is an immediate corollary of Theorem \ref{thm-bifJac}. Also other well known theorems in variational bifurcation theory can be obtained from Theorem \ref{thm-bifJac} as explained in the introduction of \cite{Specflow}. However, global bifurcation has never been studied by the spectral flow and it follows from a well-known example of B\"ohme \cite{Boehme} that there is no hope to get results in an abstract generality as in Theorem \ref{thm-bifJac}, i.e., there are examples of equations where a change of the Morse index only yield local but no global bifurcation.


\subsection{Spectral Flow, Parity and Global Bifurcation}
Note that if $H$ is a Hilbert space, then $\Phi_S(H)\subset\Phi_0(H)$ by the well-known fact that kernels of selfadjoint operators are perpendicular to their range. Consequently the parity and the spectral flow are both defined for paths in $\Phi_S(H)$. The following proposition explains the relation between these two invariants and a proof of it was outlined by Fitzpatrick, Pejsachowicz and Recht in \cite[Prop. 3.10]{Specflow}. Their argument is pretty much based on their particular construction of the spectral flow and uses Galerkin approximations as well as the so called cogredient parametrix that they introduced to transform paths in $\Phi_S(H)$ into some normal form. Here we propose a different proof in the spirit of \cite{Lesch} and \cite{CompSfl} that uses $(P5)$ and $(S5)$ and which is independent of the particular construction of the spectral flow.  

\begin{prop}\label{prop-parity}
Let $L=\{L_\lambda\}_{\lambda\in [a,b]}$ be a path in $\Phi_S(H)$ such that $L_a,L_b\in\GL(H)$. Then

\begin{align}\label{equ-sflpar}
\sfl(L,[a,b])\mod 2=\sigma(L,[a,b])\in\mathbb{Z}_2.
\end{align}
\end{prop}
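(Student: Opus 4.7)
The plan is to reduce the identity to a single model computation by exploiting the group-theoretic content of (P5) and (S5). Both $\sfl(L,[a,b]) \bmod 2$ and $\sigma(L,[a,b])$ are $\mathbb{Z}_2$-valued invariants, additive under concatenation by (S2) and (P2), homotopy invariant with invertible endpoints by (S3) and (P3), and vanishing on paths in $\GL(H)\cap\Phi_S(H)$ by (S1) and (P1). This common formal structure is what allows one to play $\sfl$ and $\sigma$ off against each other.

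First I would reduce to loops. Since $L$ is continuous and $\Phi_S^+(H)$, $\Phi_S^-(H)$, $\Phi_S^i(H)$ are the path components of $\Phi_S(H)$, the path lies entirely in one component. For paths in the contractible components $\Phi_S^\pm(H)$, both invariants are determined by the endpoints: by (MorseRel) one has $\sfl(L)\equiv \mu_{Morse}(L_a)-\mu_{Morse}(L_b) \bmod 2$, and the same congruence for $\sigma$ follows from an explicit finite-rank parametrix together with (S4), (P4) after perturbing to a transverse path with only isolated simple eigenvalue crossings. For a path in $\Phi_S^i(H)$, I would close up $L$ by a path in $\GL(H)\cap\Phi_S^i(H)$ from $L_b$ back to $L_a$; by (S1) and (P1) the closing contributes nothing to either invariant, so equality reduces to the case of a loop.

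For a loop $\gamma$ in $\Phi_S^i(H)$ the isomorphisms (S5) and (P5) are the crux. The inclusion $\iota:\Phi_S^i(H)\hookrightarrow\Phi_0(H)$ induces $\iota_\ast:\pi_1(\Phi_S^i(H))\to\pi_1(\Phi_0(H))$, which under (S5) and (P5) becomes a group homomorphism $\mathbb{Z}\to\mathbb{Z}_2$. Such a homomorphism is either trivial or reduction mod $2$, so it suffices to exhibit one loop for which $\sfl$ and $\sigma$ are both odd. I would take $L_0=I_{H_+}\oplus(-I_{H_-})$ on an orthogonal decomposition into two infinite-dimensional summands, pick a unit vector $e\in H_+$, and consider $L_t=L_0-tP_e$ for $t\in[0,2]$, closed by any path in $\GL(H)\cap\Phi_S^i(H)$ from $L_2$ back to $L_0$. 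The eigenvalue $1-t$ of $L_t$ on $\spa\{e\}$ crosses zero transversally, giving $\sfl=-1$; with the constant parametrix $M=I_{H_+}\oplus(-I_{H_-})$ one has $ML_t=I-tP_e$, whose Leray-Schauder degree flips from $+1$ to $-1$, so $\sigma=1$.

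The main technical obstacle will be the reduction from paths to loops. For $\Phi_S^\pm(H)$-paths the mod-$2$ parity formula requires an explicit parametrix construction inverting the positive/negative spectral parts and must be checked directly (typically by transversal reduction to a sum of rank-one model crossings via (S4) and (P4)). For $\Phi_S^i(H)$-paths the closing-up step demands path-connectedness of $\GL(H)\cap\Phi_S^i(H)$; this is a topological lemma provable by deformation-retracting to the space of symmetries $S=S^\ast=S^{-1}$ with infinite-dimensional $\pm 1$ eigenspaces, all of which are unitarily conjugate to $I_{H_+}\oplus(-I_{H_-})$ using Kuiper's theorem.
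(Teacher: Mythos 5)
Your argument for the essential case --- paths in $\Phi^i_S(H)$ --- is correct and is essentially the paper's proof: close up by a path in $\GL(H)\cap\Phi^i_S(H)$ (whose connectedness you justify exactly as the paper does, via retraction to symmetries and Kuiper), observe that $\sigma$ and $\sfl_2$ then only need to be compared as homomorphisms $\pi_1(\Phi^i_S(H))\cong\mathbb{Z}\rightarrow\mathbb{Z}_2$, and pin both down as the unique nontrivial such homomorphism by one model loop. Your model loop $L_t=L_0-tP_e$ with constant parametrix $L_0$ is a correct computation and is interchangeable with the paper's $P_+-P_-+\lambda P_0$. Where you genuinely diverge is the reduction for paths lying in the contractible components $\Phi^\pm_S(H)$: the paper disposes of this case in two lines by stabilizing, replacing $L_\lambda$ by $L^0_\lambda(v,u,w)=v+L_\lambda u-w$ on $H\oplus H\oplus H$, which lands in $\Phi^i_S$ and preserves both invariants by (P4) and (S4); you instead propose to verify directly that $\sigma(L,[a,b])\equiv\mu_{Morse}(L_a)-\mu_{Morse}(L_b)\bmod 2$ on $\Phi^+_S(H)$ and match it against \eqref{MorseRel}. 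That identity is true and classical, but as written it is the one real gap in your proposal: ``an explicit finite-rank parametrix together with (S4), (P4) after perturbing to a transverse path'' is not yet an argument. To close it you would, for instance, write $L_\lambda=P_\lambda+C_\lambda$ continuously in $\lambda$ with $P_\lambda$ positive definite and $C_\lambda$ compact selfadjoint (possible locally by spectral cutoff and globally by a partition of unity, since both conditions are convex), take $M_\lambda=P_\lambda^{-1}$ as parametrix, and use that $I+P_\lambda^{-1}C_\lambda=P_\lambda^{-1}L_\lambda$ is conjugate to the selfadjoint operator $P_\lambda^{-1/2}L_\lambda P_\lambda^{-1/2}$, whence by Sylvester's law its Leray--Schauder degree is $(-1)^{\mu_{Morse}(L_\lambda)}$. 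This is all doable, but it is precisely the work the paper's stabilization trick is designed to avoid; if you adopt that trick, the rest of your proof goes through verbatim.
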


\begin{proof}
Let us first note that we only need to show \eqref{equ-sflpar} for paths in $\Phi^i_S(H)$. Indeed, if $L=\{L_\lambda\}_{\lambda\in[a,b]}$ is a path in $\Phi^\pm_S(H)$, we consider $L^0=\{L^0_\lambda\}_{\lambda\in[a,b]}$, where $L^0_\lambda:H\oplus H\oplus H\rightarrow H\oplus H\oplus H$ is defined by $L^0_\lambda(v,u,w)=v+L_\lambda u-w$. As $H$ is of infinite dimension, it follows that $L^0_\lambda\in\Phi^i_S(H)$, and moreover we obtain from (P4) and (S4) that $\sigma(L^0,[a,b])=\sigma(L,[a,b])$ as well as $\sfl(L^0,[a,b])=\sfl(L,[a,b])$.\\
As next step, we recall the well-known fact that $\GL(H)\cap\Phi^i_S(H)$ is path connected, which can be shown by functional calculus as follows. Let $P_0$ be some proper orthogonal projection, i.e., the kernel and range of $P_0$ are both of infinite dimension. For $T\in\GL(H)\cap\Phi^i_S(H)$, we set $T_s=T|T|^{-1}((1-s)|T|+sI_H)$, where $|T|=\sqrt{T^2}$ and $s\in[0,1]$. As $|T|$ is positive, invertible and commutes with $T$, $T_s$ is in $\GL(H)\cap\Phi^i_S(H)$ for all $s\in[0,1]$ and connects $T$ to $T_1:=T|T|^{-1}$. The operator $T_1$ is orthogonal and selfadjoint and thus a symmetry, i.e., $T_1=T^\ast_1=T^{-1}_1$. Consequently,  $T_1=2P-I_H$ for an orthogonal projection which is proper as $T_1\in\Phi^i_S(H)$. Now as $P$ and $P_0$ are proper, there is an orthogonal operator $O\in\mathcal{L}(H)$ such that $P=O^\ast P_0O$ (cf. e.g. \cite[Prop. 5.1.7]{book}). This already shows that $\GL(H)\cap\Phi^i_S(H)$ is path connected since every orthogonal operator on an infinite dimensional Hilbert space can be connected by a path of orthogonal operators to the identity $I_H$, which was shown by Putnam and Wintner in \cite{Putnam} and also follows from Kuiper's Theorem.\\
Let now $\Omega_0(\Phi^i_S(H))$ be the set of all paths in $\Phi^i_S(H)$ having invertible endpoints. Then the parity and the $\mod 2$-reduction of the spectral flow, which we henceforth denote by $\sfl_2$, define maps

\begin{align*}
\sigma,\sfl_2 :\Omega_0(\Phi^i_S(H))\rightarrow\mathbb{Z}_2
\end{align*}
that induce homomorphisms between $\pi_1(\Phi^i_S(H))$ and $\mathbb{Z}_2$. As the fundamental group of $\Phi^i_S(H)$ is infinitely cyclic and $\Hom(\mathbb{Z},\mathbb{Z}_2)=\mathbb{Z}_2$, there only is one non-trivial homomorphism between these groups. Now by (S5), $\sfl_2$ is non-trivial on $\pi_1(\Phi^i_S(H))$. Moreover, a closed path in $\Phi^i_S(H)$ with non-trivial parity can be constructed as follows (cf. \cite[Theorem 3.1]{RobertIndBundle}). Let $\{e_k\}_{k\in\mathbb{Z}}$ be a complete orthonormal system of $H$. We denote by $P_0$ the orthogonal projection onto the span of $e_0$ and by $P_\pm$ the orthogonal projections onto the closures of the spans of $\{e_k\}_{\pm k\in\mathbb{N}}$, respectively. Consider the paths of operators
\[L_\lambda=P_+-P_-+\lambda\, P_0,\quad \lambda\in[-1,1],\]
and the constant path $M_\lambda=P_+-P_-+P_0\in GL(H)$. Then
\[M_\lambda L_\lambda=I_H-(1-\lambda)P_0,\quad\lambda\in[-1,1],\]
and thus $M$ is a parametrix for $L$. It follows from \eqref{LSdeg} that
\begin{align*}
\deg_{LS}(ML_1)=\deg_{LS}(I_H)=1 \text{ and } \deg_{LS}(M L_{-1})=\deg_{LS}(I_H-2P_0)=-1,
\end{align*}
which means that $\sigma(L,[-1,1])=-1$. As $\GL(H)\cap\Phi^i_S(H)$ is path connected, we can concatenate $L$ and a path in the latter set that connects the endpoint of $L$ to the initial point of $L$. By $(P1)$ and $(P2)$ this yields a closed path in $\Phi^i_S(H)$ having a non-trivial parity. Consequently, $\sigma$ and $\sfl_2$ coincide on $\pi_1(\Phi^i_S(H))$.\\
Now let $L$ be an element of $\Omega_0(\Phi^i_S(H))$ and let $L'$ be a path of operators in $\GL(H)\cap\Phi^i_S(H)$ that connects the endpoint of $L$ to its initial point. Then by (P1), (P2), (S1) and (S2)

\begin{align*}
\sigma(L)=\sigma(L')+\sigma(L)=\sigma(L'\ast L)=\sfl_2(L'\ast L)=\sfl_2(L')+\sfl_2(L)=\sfl_2(L),
\end{align*}
and thus $\sigma$ and $\sfl_2$ coincide on $\Omega_0(\Phi^i_S(H))$, which proves the proposition.
\end{proof}
\noindent
The following theorem is an immediate consequence of Theorem \ref{Rabier} and Proposition \ref{prop-parity}. We are not aware of any reference for the theorem, but guess that its statement is folklore. 

\begin{theorem}\label{thm-globalbif}
Let $f:\mathbb{R}\times H\rightarrow\mathbb{R}$ be $C^2$ and such that $\nabla f_\lambda=L_\lambda+C(\lambda,\cdot)$ for a path $L=\{L_\lambda\}_{\lambda\in \mathbb{R}}$ in $\Phi_S(H)$ and a compact map $C:\mathbb{R}\times H\rightarrow H$ such that $C(\lambda,0)=0$ as well as $D_0C(\lambda,\cdot)=0$ for all $\lambda\in\mathbb{R}$. Let $\mathcal{S}$ denote the closure of $$\{(\lambda,u)\in\mathbb{R}\times H:\, (\nabla f_\lambda)(u)=0\}\setminus\mathbb{R}\times\{0\}$$ in $\mathbb{R}\times H$, $\lambda_-<\lambda_+$ and let $\mathcal{T}$ be the connected component of $\mathcal{S}\cup[\lambda_-,\lambda_+]\times\{0\}$ containing $[\lambda_-,\lambda_+]\times\{0\}$. If $L_{\lambda_\pm}\in\GL(H)$ and $\sfl(L,[\lambda_-,\lambda_+])$ is odd, then there is a global bifurcation from the interval $[\lambda_-,\lambda_+]$, i.e.,  either $\mathcal{T}$ is unbounded or $\mathcal{T}$ contains a point $(\lambda^\ast,0)$ such that $\lambda^\ast\notin[\lambda_-,\lambda_+]$. 
\end{theorem}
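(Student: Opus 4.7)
My plan is to apply the abstract global bifurcation Theorem \ref{Rabier} to the map $G := \nabla f \colon \mathbb{R} \times H \to H$, with Proposition \ref{prop-parity} converting the spectral flow hypothesis into the parity hypothesis required there, and to supplement the resulting dichotomy by showing that $G$ is proper on bounded subsets of $\mathbb{R}\times H$ so that non-compactness of $\mathcal{T}$ upgrades to unboundedness.

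To verify the hypotheses of Theorem \ref{Rabier}, the decomposition $\nabla f_\lambda = L_\lambda + C(\lambda, \cdot)$ together with $C(\lambda, 0) = 0$ yields $G(\lambda, 0) = 0$, and $D_0 C(\lambda, \cdot) = 0$ gives $D_0 G_\lambda = L_\lambda \in \Phi_S(H) \subset \Phi_0(H)$, so the derivatives at the trivial branch are Fredholm of index zero. The assumption $L_{\lambda_\pm} \in \GL(H)$ transports directly to $D_0 G_{\lambda_\pm} \in \GL(H)$, and Proposition \ref{prop-parity} gives $\sigma(L, [\lambda_-, \lambda_+]) = \sfl(L, [\lambda_-, \lambda_+]) \bmod 2 = 1$ since the spectral flow is odd by hypothesis. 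Theorem \ref{Rabier} then delivers the dichotomy that $\mathcal{T}$ is either non-compact in $\mathbb{R}\times H$, or it contains a point $(\lambda^\ast, 0)$ with $\lambda^\ast \notin [\lambda_-, \lambda_+]$.

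It remains to show $G$ is proper on bounded sets, and this is where I expect the main obstacle. Given a bounded sequence $(\lambda_n, u_n)$ with $G(\lambda_n, u_n) \to w$, I would pass to subsequences with $\lambda_n \to \lambda$ and $u_n \rightharpoonup u$ weakly in $H$. Compactness and continuity of $C$ give $C(\lambda_n, u_n) \to C(\lambda, u)$ in norm, so $L_{\lambda_n} u_n \to w - C(\lambda, u)$ strongly; norm-continuity of $\lambda \mapsto L_\lambda$ combined with boundedness of $\{u_n\}$ then rewrites this as $L_\lambda u_n \to w - C(\lambda, u)$. Since $L_\lambda$ is selfadjoint Fredholm, one has the orthogonal splitting $H = \ker L_\lambda \oplus (\ker L_\lambda)^\perp$ with $\ker L_\lambda$ finite-dimensional, and $L_\lambda$ restricts to a Banach-space isomorphism on the complement; hence the perpendicular component of $u_n$ converges strongly, while the kernel component is bounded in a finite-dimensional space and thus has a convergent subsequence. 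The delicate point is the possible degeneracy of $L_\lambda$ in the limit, which is precisely what the selfadjoint Fredholm structure (finite-dimensional kernel and closed range) resolves; everything else is a direct concatenation of Theorem \ref{Rabier}, Proposition \ref{prop-parity}, and the standing inclusion $\Phi_S(H) \subset \Phi_0(H)$.
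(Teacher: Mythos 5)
Your proposal is correct and follows the same overall route as the paper: apply Theorem \ref{Rabier} to $G=\nabla f$, use Proposition \ref{prop-parity} to turn the odd spectral flow into $\sigma(L,[\lambda_-,\lambda_+])=1$, and then upgrade the non-compactness alternative to unboundedness by proving that $\nabla f$ is proper on bounded sets. The only place where you genuinely diverge is the properness argument, which is in fact the only step the paper writes out in detail. The paper uses a parametrix for the whole path, i.e.\ a continuous family $T:\mathbb{R}\rightarrow\GL(H)$ with $T_\lambda L_\lambda=I_H+K_\lambda$ for compact $K_\lambda$, and solves $x_n=T_{\lambda_n}y_n-K_{\lambda_n}x_n-T_{\lambda_n}C(\lambda_n,x_n)$ to extract a convergent subsequence; you instead pass to the limit parameter $\lambda$ and use the selfadjoint Fredholm splitting $H=\ker L_\lambda\oplus(\ker L_\lambda)^\perp$ together with the bounded invertibility of $L_\lambda$ on the complement of its kernel. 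Both work; the parametrix version avoids any discussion of the limiting operator's kernel and is uniform in $\lambda$, while yours is more elementary and self-contained. One small imprecision in your write-up: a compact map in the sense used here (continuous, sending bounded sets to relatively compact sets) need not satisfy $C(\lambda_n,u_n)\rightarrow C(\lambda,u)$ when $u_n$ only converges weakly; but this does not matter, since all your argument requires is that $C(\lambda_n,u_n)$ admits \emph{some} strongly convergent subsequence, which compactness does give, and the identity of the limit is irrelevant for concluding that $L_\lambda u_n$ converges and hence that $(u_n)$ has a convergent subsequence.
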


\begin{proof}
By the remark below Theorem \ref{Rabier} we only need to show that $\nabla f=L+C:\mathbb{R}\times H\rightarrow H$ is proper on any bounded subset of $\mathbb{R}\times H$. As in the construction of the parity in Section \ref{sec-parity}, there is a parametrix for $L$, i.e., a continuous map $T:\mathbb{R}\rightarrow\GL(H)$ such that $T_\lambda L_\lambda=I_H+K_\lambda$ for some family of compact maps $K=\{K_\lambda\}_{\lambda\in\mathbb{R}}$. Now if $A\subset H$ is compact and $(\lambda_n,x_n)_{n\in\mathbb{N}}$ a sequence in $(\nabla f)^{-1}(A)\subset\mathbb{R}\times H$, then

\[L_{\lambda_n}x_n+C(\lambda_n,x_n)=:y_n\in A,\]
which yields

\begin{align*}
x_n=T_{\lambda_n}y_n-K_{\lambda_n}x_n-T_{\lambda_n}C(\lambda_n,x_n).
\end{align*}  
If $(\lambda_n,x_n)_{n\in\mathbb{N}}$ is bounded, then $(\lambda_n)_{n\in\mathbb{N}}$ has a convergent subsequence. The continuity of $T$ as well as the compactness of the set $A$ and the operators $K$ and $C$ now shows the existence of a convergent subsequence of $(\lambda_n,x_n)_{n\in\mathbb{N}}$. Thus the intersection of $(\nabla f)^{-1}(A)$ with any bounded subset is compact as a closed and relatively compact subset of $\mathbb{R}\times H$.
\end{proof}

\subsection{Proof of the Main Theorem}
Let us firstly note that by the compactness of the embedding $H^\frac{1}{2}(S^1,\mathbb{R}^{2n})\hookrightarrow L^2(S^1,\mathbb{R}^{2n})$, the selfadjoint operators $L_\lambda$ are Fredholm, i.e. $L_\lambda\in\Phi^i_S(H^\frac{1}{2}(S^1,\mathbb{R}^{2n}))$ (see \cite[Lemma 3.1]{CalcVar}). Moreover, $\nabla\psi_\lambda=L_\lambda+C(\lambda,\cdot)$, where $C:\mathbb{R}\times H^\frac{1}{2}(S^1,\mathbb{R}^{2n})\rightarrow H^\frac{1}{2}(S^1,\mathbb{R}^{2n})$ is compact according to \cite[Prop. B 37]{Rabinowitz}. Thus we are in the setting of Theorem \ref{thm-bifJac} and Theorem \ref{thm-globalbif}. Let us also note that it is sufficient to show the existence of local and global bifurcation in $H^\frac{1}{2}(S^1,\mathbb{R}^{2n})$ as every weak solution of \eqref{equation} is in fact classical (see, e.g., \cite[\S 12.3]{book}).\\
To show $(i)$, we firstly recall the comparison property of the spectral flow, which was shown by Pejsachowicz and the third author in \cite{BifJac}.

\begin{theorem}\label{thm-comparison}
Let $a,b\in\mathbb{R}$, $a<b$, and $\mathcal{L}=\{\mathcal{L}_\lambda\}_{\lambda\in [a,b]}$ as well as $\mathcal{M}=\{\mathcal{M}_\lambda\}_{\lambda\in [a,b]}$ paths in $\Phi_S(H)$ such that the difference $\mathcal{L}_\lambda-\mathcal{M}_\lambda$ is compact for all $\lambda\in [a,b]$. If
		
\begin{align}\label{order}
\mathcal{L}_a\geq \mathcal{M}_a\quad\text{and}\quad \mathcal{L}_b\leq \mathcal{M}_b,
\end{align}
then
		
\[\sfl(\mathcal{L},[a,b])\leq\sfl(\mathcal{M},[a,b]).\] 
	\end{theorem}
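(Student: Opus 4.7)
My plan is to use the standard two-dimensional ``cocycle'' trick for the spectral flow applied to the affine homotopy between $\mathcal{L}$ and $\mathcal{M}$. Define
\[h:[a,b]\times[0,1]\to \Phi_S(H),\quad h(\lambda,s)=(1-s)\mathcal{L}_\lambda+s\mathcal{M}_\lambda=\mathcal{L}_\lambda+s(\mathcal{M}_\lambda-\mathcal{L}_\lambda).\]
As a convex combination of selfadjoint operators, $h(\lambda,s)$ is selfadjoint, and the second expression shows it is a compact perturbation of the Fredholm operator $\mathcal{L}_\lambda$, so $h$ is a genuine continuous map into $\Phi_S(H)$.

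The first step is to establish the rectangular cocycle identity
\[\sfl(\mathcal{L},[a,b])+\sfl(h(b,\cdot),[0,1])=\sfl(h(a,\cdot),[0,1])+\sfl(\mathcal{M},[a,b]).\]
I would subdivide $[a,b]\times[0,1]$ into a fine grid of closed sub-rectangles $R_{ij}=[\lambda_{i-1},\lambda_i]\times[s_{j-1},s_j]$ on each of which a single $\delta_{ij}>0$ can be chosen so that $\chi_{[-\delta_{ij},\delta_{ij}]}(h(\lambda,s))$ has constant finite rank throughout $R_{ij}$. On such a sub-rectangle the spectral flow along each of the four edges reduces, by the very definition \eqref{def-sfl}, to a difference of dimensions of nonnegative finite-dimensional spectral subspaces, and the boundary cocycle becomes an elementary telescoping identity. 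Summing over all $R_{ij}$ and invoking path additivity (S2) on each side of $[a,b]\times[0,1]$ then yields the claimed global identity, without any invertibility assumption at the corners.

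The second step is the monotonicity analysis of the two vertical edges. The path $s\mapsto h(a,s)$ is operator-monotonically non-increasing, since $\mathcal{L}_a-\mathcal{M}_a\geq 0$, while $s\mapsto h(b,s)$ is operator-monotonically non-decreasing, since $\mathcal{M}_b-\mathcal{L}_b\geq 0$. For such monotone paths of selfadjoint Fredholm operators the min-max principle forces every eigenvalue function $s\mapsto\mu_k(s)$ to be monotone in $s$, so on each subinterval entering \eqref{def-sfl} the dimension of the nonnegative part of the spectral window can only move in one direction. Hence $\sfl(h(a,\cdot),[0,1])\leq 0\leq\sfl(h(b,\cdot),[0,1])$, which combined with the cocycle identity gives $\sfl(\mathcal{L},[a,b])\leq\sfl(\mathcal{M},[a,b])$.

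I expect the cocycle identity to be the main technical obstacle, precisely because property (S3) only provides homotopy invariance of the spectral flow through paths with invertible endpoints, whereas here the corner operators $\mathcal{L}_a,\mathcal{L}_b,\mathcal{M}_a,\mathcal{M}_b$ may all be singular. The subdivision approach above sidesteps this by working directly from \eqref{def-sfl} on each small sub-rectangle, where the four edges share a common spectral window $[-\delta_{ij},\delta_{ij}]$ and no homotopy invariance is needed at all. An alternative route would be to replace $h$ by the shifted homotopy $h+\varepsilon I_H$ for small $\varepsilon>0$ to make the corners invertible, apply (S3) cleanly for each $\varepsilon$, and then let $\varepsilon\to 0^+$ while controlling the jumps of the four spectral flows via the finite-rank spectral projection onto a small interval around zero.
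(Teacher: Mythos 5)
The paper does not actually prove Theorem \ref{thm-comparison}; it imports it from \cite{BifJac} (see also \cite{CompSfl}). Your architecture --- the affine homotopy $h(\lambda,s)=(1-s)\mathcal{L}_\lambda+s\mathcal{M}_\lambda$, which stays in $\Phi_S(H)$ precisely because $\mathcal{M}_\lambda-\mathcal{L}_\lambda$ is compact, the vanishing of the spectral flow around the boundary of the rectangle, and the sign of the spectral flow along the two monotone vertical edges --- is exactly the strategy of the original proof, and it is also the strategy the paper itself deploys in the proof of Theorem \ref{thm-main}(ii). Your worry about the cocycle identity is, however, misplaced: you do not need a subdivision argument or invertible corners. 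The boundary loop of the square is null-homotopic in $\Phi_S(H)$ via a homotopy that fixes the basepoint (contract the square), so its spectral flow vanishes by the first part of (S3) together with (S1) and (S2); this is precisely how the paper argues below \eqref{sfldelta}.

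The step that does not hold up as written is the monotonicity analysis of the vertical edges. For strongly indefinite operators there is no global enumeration of eigenvalue functions $s\mapsto\mu_k(s)$ to which min--max applies, since the essential spectrum sits on both sides of $0$; and the constant-rank condition on $\chi_{[-a_i,a_i]}(h(a,s))$ in \eqref{def-sfl} does not by itself rule out an eigenvalue leaving the window through $a_i$ while another enters through $-a_i$, which would make $\dim\im\chi_{[0,a_i]}$ move the ``wrong'' way even along a monotone path. So ``the dimension of the nonnegative part can only move in one direction'' is not an elementary consequence of the definition. The statement you need --- a monotone non-decreasing path of selfadjoint Fredholm operators has non-negative spectral flow --- is true, and is exactly Proposition \ref{prop-pos} of the paper, whose proof goes through a small positive shift $\delta I_H$ and the crossing-form formula \eqref{sfl-formula} and requires $C^1$ regularity. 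Your vertical edges are affine in $s$, hence $C^1$, so you should simply invoke Proposition \ref{prop-pos} (for the edge at $\lambda=a$, applied to the reversed path) in place of the min--max sketch; with that substitution the proof is complete and coincides with the argument in the cited source.
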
 
\noindent
Here $\leq$ in \eqref{order} denotes the common partial order on the set of all selfadjoint operators given by
	
	\[T\leq S \Longleftrightarrow \langle(S-T)u,u\rangle\geq 0,\, u\in H.\] 
We now introduce in the setting of Theorem \ref{thm-main} two paths $M=\{M_\lambda\}_{\lambda\in[\lambda_-,\lambda_+]},N=\{N_\lambda\}_{\lambda\in[\lambda_-,\lambda_+]}$ of totally indefinite selfadjoint Fredholm operators in $H^\frac{1}{2}(S^1,\mathbb{R}^{2n})$ by

\begin{align}\label{MN}
\begin{split}
\langle M_\lambda u,v\rangle_{H^\frac{1}{2}}&=Q(u,v)+\int^{2\pi}_0{\langle B_\lambda u(t),v(t)\rangle\,dt}\\
\langle N_\lambda u,v\rangle_{H^\frac{1}{2}}&=Q(u,v)+\int^{2\pi}_0{\langle C_\lambda u(t),v(t)\rangle\,dt},
\end{split}
\end{align}
where $Q$ is the bilinear form introduced in \eqref{Gamma},

\begin{align}\label{BC}
\begin{split}
B_\lambda&=\beta_{-}I_{2n}+\frac{\lambda-\lambda_-}{\lambda_+-\lambda_-}(\alpha_{+}-\beta_{-})I_{2n},\\
C_\lambda&=\alpha_{-}I_{2n}+\frac{\lambda-\lambda_-}{\lambda_+-\lambda_-}(\beta_{+}-\alpha_{-})I_{2n}
\end{split}
\end{align}
are straight-line paths in $\Mat(2n,\mathbb{R})$ and $\alpha_\pm$ and $\beta_\pm$ are as defined in \eqref{alpha} and \eqref{beta}. Note that by the definition of $\alpha_\pm$ and $\beta_\pm$ we immediately see that

\[M_{\lambda_-}\geq L_{\lambda_-},\quad M_{\lambda_+}\leq L_{\lambda_+}\quad\text{as well as}\quad N_{\lambda_-}\leq L_{\lambda_-},\quad N_{\lambda_+}\geq L_{\lambda_+}. \]
As $M_\lambda-L_\lambda$ as well as $N_\lambda-L_\lambda$ are compact by the Rellich embedding theorem (see again \cite[Lemma 3.1]{CalcVar}), we obtain from Theorem \ref{thm-comparison}

\[\sfl(M,[\lambda_-,\lambda_+])\leq\sfl(L,[\lambda_-,\lambda_+])\leq \sfl(N,[\lambda_-,\lambda_+])\]  
and thus by Theorem \ref{thm-bifJac} we only need to show that $\sfl(M,[\lambda_-,\lambda_+])>0$ or $\sfl(N,[\lambda_-,\lambda_+])<0$ under the given assumptions.\\
Henceforth we make use of a method to compute the spectral flow that was invented by Robbin and Salamon in \cite{Robbin-Salamon}. Let $H$ be a separable Hilbert space and $\mathcal{L}=\{\mathcal{L}_\lambda\}_{\lambda\in [a,b]}$ a path in the normed space of bounded operators $\mathcal{L}(H)$, which we assume to be continuously differentiable with respect to the operator norm. We also assume that each $\mathcal{L}_\lambda$ is selfadjoint and Fredholm. A parameter value $\lambda^\ast$ is called a \textit{crossing} of $\mathcal{L}$ if $\ker(\mathcal{L}_{\lambda^\ast})\neq \{0\}$, and the \textit{crossing form} of a crossing is the quadratic form on the finite dimensional space $\ker(\mathcal{L}_{\lambda^\ast})$ defined by

\[\Gamma(\mathcal{L},\lambda^\ast)[u]=\langle \dot{\mathcal{L}}_{\lambda^\ast}u,u\rangle,\qquad u\in \ker(\mathcal{L}_{\lambda^\ast}),\]   
where $\dot{\mathcal{L}}_{\lambda^\ast}$ denotes the derivative of the path $\mathcal{L}$ at $\lambda=\lambda^\ast$. A crossing is called \textit{regular} if $\Gamma(\mathcal{L},\lambda^\ast)$ is non-degenerate. The following theorem was shown in \cite{Robbin-Salamon} (see also \cite{Homoclinics}).

\begin{theorem}\label{thm-sfl-crossings}
Let $\mathcal{L}=\{\mathcal{L}_\lambda\}_{\lambda\in[a,b]}$ be a path in $\Phi_S(H)$ as above.
\begin{enumerate}
\item[(a)] The crossings of $\mathcal{L}$ are isolated if they are regular, i.e., if $\lambda^\ast$ is regular, then $\mathcal{L}_\lambda$ is invertible for any $\lambda\neq\lambda^\ast$ that is sufficiently close to $\lambda$,
\item[(b)] If $\mathcal{L}$ has only regular crossings, then there are only finitely many crossings and the spectral flow of $\mathcal{L}$ can be computed by

\begin{align}\label{sfl-formula}
\sfl(\mathcal{L},[a,b])=-m^-(\Gamma(\mathcal{L},a))+\sum_{\lambda\in(a,b)}\sgn(\Gamma(\mathcal{L},\lambda))+m^-(-\Gamma(\mathcal{L},b)),
\end{align} 
where $\sgn$ denotes the signature and $m^-$ the Morse-index of a quadratic form.
\item[(c)] there is some $\varepsilon>0$ such that the perturbed path $\mathcal{L}^\delta=\{\mathcal{L}_\lambda+\delta I_H\}_{\lambda\in[a,b]}$ has only regular crossings for almost every $\delta\in(-\varepsilon,\varepsilon)$.
\end{enumerate}
\end{theorem}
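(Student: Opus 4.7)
My plan is to prove (a), (b), (c) in sequence, combining first-order perturbation theory for selfadjoint Fredholm operators with the Phillips definition \eqref{def-sfl} of the spectral flow. At a regular crossing $\lambda^\ast$ the kernel $V:=\ker(\mathcal{L}_{\lambda^\ast})$ is finite-dimensional and $\mathcal{L}_{\lambda^\ast}$ is invertible on $V^\perp$. By Kato's regular perturbation theory applied to the $C^1$ path $\mathcal{L}$, I can choose $\epsilon,\eta>0$ so small that $\pm\epsilon\notin\sigma(\mathcal{L}_\lambda)$ for all $\lambda\in[\lambda^\ast-\eta,\lambda^\ast+\eta]$; the contour integral $P_\lambda=\frac{1}{2\pi i}\oint_{|z|=\epsilon}(z-\mathcal{L}_\lambda)^{-1}\,dz$ then provides a $C^1$ family of selfadjoint spectral projections of constant rank $k:=\dim V$. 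Trivialising $\im P_\lambda$ by a $C^1$ family of partial isometries $U_\lambda:V\to\im P_\lambda$ with $U_{\lambda^\ast}=\mathrm{id}_V$, the reduced operators $\widetilde{\mathcal{L}}_\lambda:=U_\lambda^\ast\mathcal{L}_\lambda U_\lambda$ on $V$ satisfy $\widetilde{\mathcal{L}}_{\lambda^\ast}=0$ and $\dot{\widetilde{\mathcal{L}}}_{\lambda^\ast}=\dot{\mathcal{L}}_{\lambda^\ast}|_V$, so $\widetilde{\mathcal{L}}_\lambda=(\lambda-\lambda^\ast)\,\dot{\mathcal{L}}_{\lambda^\ast}|_V+o(\lambda-\lambda^\ast)$. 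Non-degeneracy of $\Gamma(\mathcal{L},\lambda^\ast)$ then makes this invertible for $0<|\lambda-\lambda^\ast|$ small, and combined with invertibility of $\mathcal{L}_\lambda$ on $\ker P_\lambda$ this proves (a).

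For (b), isolatedness and compactness of $[a,b]$ yield finitely many crossings; by (S1) the spectral flow vanishes on any sub-interval where $\mathcal{L}_\lambda$ is invertible, so by additivity (S2) it suffices to compute $\sfl(\mathcal{L},[\lambda^\ast-\eta,\lambda^\ast+\eta])$ at each crossing. Specialising \eqref{def-sfl} to this interval, the contribution is $\dim\im\chi_{[0,\epsilon]}(\mathcal{L}_{\lambda^\ast+\eta})-\dim\im\chi_{[0,\epsilon]}(\mathcal{L}_{\lambda^\ast-\eta})$, and by the reduction above, the eigenvalues of $\mathcal{L}_\lambda$ in $[-\epsilon,\epsilon]$ are to leading order $(\lambda-\lambda^\ast)$ times the eigenvalues of $\Gamma(\mathcal{L},\lambda^\ast)$. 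For an interior crossing this difference equals $m^+(\Gamma)-m^-(\Gamma)=\sgn\Gamma(\mathcal{L},\lambda^\ast)$; at the left endpoint $a$ the analogous count over $[a,a+\eta]$ yields $m^+(\Gamma)-k=-m^-(\Gamma(\mathcal{L},a))$, and at the right endpoint over $[b-\eta,b]$ it yields $k-m^-(\Gamma)=m^-(-\Gamma(\mathcal{L},b))$. Summing all contributions produces \eqref{sfl-formula}.

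For (c), I would apply Sard's theorem. Covering $[a,b]$ by finitely many sub-intervals on each of which the contour argument produces a spectral projection of constant rank $k'$ and $C^1$ eigenvalue functions $\mu_1(\lambda),\ldots,\mu_{k'}(\lambda)$ describing the eigenvalues of $\mathcal{L}_\lambda$ in $(-\epsilon,\epsilon)$, the perturbed path $\mathcal{L}^\delta=\mathcal{L}+\delta I_H$ has a crossing at $\lambda^\ast$ iff $\mu_j(\lambda^\ast)=-\delta$ for some $j$, and this crossing is non-regular iff additionally $\mu_j'(\lambda^\ast)=0$. Hence the set of bad $\delta\in(-\epsilon,\epsilon)$ is contained in the union of critical values of the finitely many $\mu_j$, which has Lebesgue measure zero by Sard's theorem in one variable.

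The main obstacle is the infinite-dimensional reduction underlying (a) and (b): one must verify that $P_\lambda$ is genuinely $C^1$ along the merely $C^1$ path $\mathcal{L}$, that a $C^1$ trivialisation $U_\lambda$ of the finite-rank bundle $\lambda\mapsto\im P_\lambda$ exists (which can be produced by the Riesz-type parallel transport $U_\lambda=P_\lambda P_{\lambda^\ast}(P_{\lambda^\ast}P_\lambda P_{\lambda^\ast})^{-1/2}$), and that the derivative of $\widetilde{\mathcal{L}}_\lambda$ at the crossing really coincides with $P_{\lambda^\ast}\dot{\mathcal{L}}_{\lambda^\ast}P_{\lambda^\ast}=\dot{\mathcal{L}}_{\lambda^\ast}|_V$. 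Once this analytic package is set up cleanly, the inertia bookkeeping in (b) and the Sard argument in (c) are routine.
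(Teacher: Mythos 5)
The paper does not prove this theorem: it is quoted from Robbin--Salamon \cite{Robbin-Salamon} (see also \cite{Homoclinics}), so there is no in-text argument to compare against. Your outline is essentially the standard proof from those references. Parts (a) and (b) are sound as sketched: the Riesz projection $P_\lambda$ is indeed $C^1$ because the resolvent depends $C^1$ on $\lambda$ uniformly on the compact contour, your transformation function $U_\lambda=P_\lambda P_{\lambda^\ast}(P_{\lambda^\ast}P_\lambda P_{\lambda^\ast})^{-1/2}$ is a $C^1$ isometric trivialisation, and the identity $\dot{\widetilde{\mathcal{L}}}_{\lambda^\ast}=\dot{\mathcal{L}}_{\lambda^\ast}|_V$ holds because both $\mathcal{L}_{\lambda^\ast}U_{\lambda^\ast}$ and $U_{\lambda^\ast}^\ast\mathcal{L}_{\lambda^\ast}$ vanish on the relevant spaces ($V=\ker\mathcal{L}_{\lambda^\ast}$ is orthogonal to $\im\mathcal{L}_{\lambda^\ast}$). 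The inertia bookkeeping against \eqref{def-sfl}, including the asymmetric endpoint terms $-m^-(\Gamma(\mathcal{L},a))=m^+(\Gamma)-\dim\ker(\mathcal{L}_a)$ and $m^-(-\Gamma(\mathcal{L},b))=\dim\ker(\mathcal{L}_b)-m^-(\Gamma)$, is correct.

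The one place where you pass over a genuine difficulty is (c). For a family of symmetric matrices that is merely $C^1$ in $\lambda$, the \emph{ordered} eigenvalue branches are only Lipschitz; the existence of a $C^1$ parametrisation $\mu_1(\lambda),\dots,\mu_{k'}(\lambda)$ of the eigenvalues (allowing branches to cross) is a nontrivial one-parameter theorem of Rellich (see Kato, \emph{Perturbation Theory for Linear Operators}, Thm.\ II.6.8) and fails for more parameters or less regularity -- you should invoke it explicitly rather than treat it as part of the ``contour argument''. Second, the equivalence ``the crossing of $\mathcal{L}^\delta$ at $\lambda^\ast$ is non-regular iff $\mu_j'(\lambda^\ast)=0$ for some branch through $-\delta$'' requires knowing that the eigenvalues of the crossing form $\Gamma(\mathcal{L}^\delta,\lambda^\ast)$ on $\ker(\mathcal{L}_{\lambda^\ast}+\delta I_H)$ are precisely the derivatives $\mu_j'(\lambda^\ast)$ of those branches (a Feynman--Hellmann type identity, obtained by differentiating $\widetilde{\mathcal{L}}_\lambda v_j(\lambda)=\mu_j(\lambda)v_j(\lambda)$ within the finite-dimensional reduction). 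This identification is the crux of the Sard argument and should not be left implicit; once it is in place, the rest of your argument for (c) goes through.
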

\noindent
Now \eqref{sfl-formula} in Theorem \ref{thm-sfl-crossings} yields a rather straightforward proof of $(i)$ in Theorem \ref{thm-main}. We just need to note that 

\begin{align*}
\Gamma(M,\lambda)[u]&=\frac{1}{\lambda_+-\lambda_-}\int^{2\pi}_0{\langle (\alpha_+-\beta_-) u(t),u(t)\rangle\,dt}\\
\Gamma(N,\lambda)[u]&=\frac{1}{\lambda_+-\lambda_-}\int^{2\pi}_0{\langle (\beta_+-\alpha_-) u(t),u(t)\rangle\,dt}.
\end{align*}
As $\beta_-<\alpha_+$ and $\beta_+<\alpha_-$ by assumption, it follows that $\Gamma(M,\lambda)$ is positive definite on $\ker(M_\lambda)$, whereas $\Gamma(N,\lambda)$ is negative definite on $\ker(N_\lambda)$. Thus there is a bifurcation for \eqref{equation} if either $\ker(M_\lambda)\neq\{0\}$ or $\ker(N_\lambda)\neq 0$ for some $\lambda\in[\lambda_-,\lambda_+]$. Those kernels are given by the solutions of the equations

\[\left\{
\begin{aligned}
J u'(t)&+B_\lambda u(t)=0,\quad t\in [0,2\pi]\\
u(0)&=u(2\pi),
\end{aligned}
\right.\qquad \left\{
\begin{aligned}
J u'(t)&+C_\lambda u(t)=0,\quad t\in [0,2\pi]\\
u(0)&=u(2\pi),
\end{aligned}
\right.\]
and now it follows from \eqref{BC} by a simple computation that there is a non-trivial kernel if there is some $\lambda\in[\lambda_-,\lambda_+]$ such that either $B_\lambda$ or $C_\lambda$ is an integral multiple of the identity matrix $I_{2n}$. The latter happens for the path $B$ if and only if $\Delta(\beta_-,\alpha_+)> 0$ and for $C$ if and only if $\Delta(\alpha_-,\beta_+)< 0$. This finishes the proof of $(i)$. Let us note for later reference that we actually have shown by Theorem \ref{thm-sfl-crossings} that

\begin{align}\label{sfldelta}
\sfl(M,[\lambda_-,\lambda_+])=2n\Delta(\beta_-,\alpha_+),\quad\text{and}\quad \sfl(N,[\lambda_-,\lambda_+])=2n\Delta(\alpha_-,\beta_+).
\end{align}
Next we consider $(ii)$ in Theorem \ref{thm-main} and firstly define a homotopy $h:[0,1]\times[\lambda_-,\lambda_+]\rightarrow\Phi^i_S(H)$ by

\[\langle h(s,\lambda)u,v\rangle=Q(u,v)+\int^{2\pi}_0\langle ((1-s)A_\lambda(t)+sB_\lambda)u(t),v(t)\rangle dt.\]
Note that indeed $h(s,\lambda)\in\Phi^i_S(H)$, as $h(s,\lambda)-L_\lambda$ is compact for any $(s,\lambda)\in [0,1]\times[\lambda_-,\lambda_+]$. By the homotopy invariance of the spectral flow (S3), it follows that the spectral flow of the loop made by restricting $h$ to the boundary of the square $[0,1]\times[\lambda_-,\lambda_+]$ vanishes. Thus we obtain by the concatenation property (S2) that

\begin{align*}
\sfl(L,[\lambda_-,\lambda_+])&=\sfl(h(\cdot,\lambda_-),[0,1])+\sfl(h(1,\cdot),[\lambda_-,\lambda_+])-\sfl(h(\cdot,\lambda_+),[0,1])\\
&=\sfl(h(\cdot,\lambda_-),[0,1])+\sfl(M,[\lambda_-,\lambda_+])-\sfl(h(\cdot,\lambda_+),[0,1]),
\end{align*}   
where the negative sign in the final term comes from reversing the orientation of the path and is a direct consequence of the definition \eqref{def-sfl}. We now aim to show that the latter sum is strictly greater than $0$ under the given assumptions. The following auxiliary result was shown in \cite[\S 7]{BifJac}. Here we include a new and pretty straightforward proof for the convenience of the reader.

\begin{prop}\label{prop-pos}
Let $\mathcal{L}=\{\mathcal{L}_\lambda\}_{\lambda\in[a,b]}$ be a path in $\Phi_S(H)$ that is $C^1$ with respect to the norm of $\mathcal{L}(H)$. If $\mathcal{L}_\lambda\geq\mathcal{L}_\mu$ for all $\lambda,\mu\in[a,b]$, $\lambda\geq\mu$, then $\sfl(\mathcal{L},[a,b])\geq 0$.
\end{prop}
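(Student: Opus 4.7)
The plan is to read off the inequality from the crossing-form computation of the spectral flow in Theorem~\ref{thm-sfl-crossings}. First I would observe that the monotonicity hypothesis $\mathcal{L}_\lambda \geq \mathcal{L}_\mu$ for $\lambda \geq \mu$, together with the $C^1$-assumption, forces $\dot{\mathcal{L}}_\lambda \geq 0$ as a selfadjoint operator for every $\lambda \in [a,b]$. Consequently every crossing form
\[
\Gamma(\mathcal{L},\lambda^\ast)[u] = \langle \dot{\mathcal{L}}_{\lambda^\ast} u, u\rangle, \qquad u \in \ker \mathcal{L}_{\lambda^\ast},
\]
is positive semi-definite.

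The immediate obstacle is that these crossing forms may fail to be definite, so the crossings of $\mathcal{L}$ need not be regular and formula~\eqref{sfl-formula} does not apply directly. To bypass this, I would invoke part~$(c)$ of Theorem~\ref{thm-sfl-crossings} and replace $\mathcal{L}$ by the perturbed path $\mathcal{L}^\delta = \mathcal{L} + \delta I_H$ for a generic small $\delta > 0$. This path is again $C^1$ and monotone with $\dot{\mathcal{L}}^\delta = \dot{\mathcal{L}} \geq 0$, hence its crossing forms remain positive semi-definite; being non-degenerate by the choice of $\delta$, they are in fact \emph{positive definite}. For all small enough such $\delta$, one can additionally ensure that $\mathcal{L}_a + \delta I_H$ and $\mathcal{L}_b + \delta I_H$ are invertible, since $0$ is isolated in the spectra of the Fredholm operators $\mathcal{L}_a$ and $\mathcal{L}_b$. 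The endpoint terms in \eqref{sfl-formula} then vanish and I obtain
\[
\sfl(\mathcal{L}^\delta, [a,b]) = \sum_{\lambda \in (a,b)} \sgn(\Gamma(\mathcal{L}^\delta, \lambda)) = \sum_{\lambda \in (a,b)} \dim \ker \mathcal{L}^\delta_\lambda \geq 0.
\]

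The final step is to transport this inequality back to $\mathcal{L}$. Consider the affine homotopy $h(\lambda, s) = \mathcal{L}_\lambda + s\delta I_H$ on the square $[a,b] \times [0,1]$. Combining the concatenation property~(S2) with the homotopy invariance~(S3), I would derive the identity
\[
\sfl(\mathcal{L}, [a,b]) = \sfl(\mathcal{L}^\delta, [a,b]) + \sfl(h(a, \cdot), [0,1]) - \sfl(h(b, \cdot), [0,1]).
\]
The two vertical side contributions can again be computed by \eqref{sfl-formula}: each of the paths $s \mapsto \mathcal{L}_a + s\delta I_H$ and $s \mapsto \mathcal{L}_b + s\delta I_H$ is monotone in $s$ with invertible right endpoint and, for $\delta$ small enough, no interior crossings (because $0$ is isolated in $\sigma(\mathcal{L}_a)$ and $\sigma(\mathcal{L}_b)$); the only possible crossing lies at $s=0$, where the crossing form $\delta I_H|_{\ker}$ is positive definite and therefore contributes $-m^-(\Gamma) = 0$. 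Both side flows thus vanish and $\sfl(\mathcal{L},[a,b]) = \sfl(\mathcal{L}^\delta,[a,b]) \geq 0$.

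The main obstacle I expect is precisely this last step, since (S3) as stated demands invertibility of the endpoints throughout the homotopy, which fails at $s=0$ if $\mathcal{L}_a$ or $\mathcal{L}_b$ has a nontrivial kernel. To make the displayed identity rigorous I would pre- and post-concatenate $\mathcal{L}$ with the two vertical paths in order to obtain a path from $\mathcal{L}^\delta_a$ to $\mathcal{L}^\delta_b$ whose endpoints agree with those of $\mathcal{L}^\delta$; the filled square then furnishes a homotopy rel endpoints between the two, so that (S3) applies in its stated form and the identity follows from (S2).
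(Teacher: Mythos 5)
Your proof is correct and follows essentially the same route as the paper: perturb to $\mathcal{L}^\delta=\mathcal{L}+\delta I_H$ with only regular crossings via Theorem~\ref{thm-sfl-crossings}~(c), observe that monotonicity makes all crossing forms positive definite after the perturbation, and read off $\sfl(\mathcal{L}^\delta,[a,b])\geq 0$ from \eqref{sfl-formula}. The only difference is in justifying $\sfl(\mathcal{L},[a,b])=\sfl(\mathcal{L}^\delta,[a,b])$: the paper simply invokes this as an immediate consequence of the definition \eqref{def-sfl} (citing \cite[Lemma 2.1]{IJW}), whereas you derive it by a homotopy over the square with vanishing side contributions --- a more laborious but equally valid argument, and your care about the endpoint hypothesis in (S3) is well placed.
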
 

\begin{proof}
As $\mathcal{L}_\lambda\geq\mathcal{L}_\mu$ for all $\lambda,\mu\in[a,b]$, $\lambda\geq \mu$, we first note that $\Gamma(\mathcal{L},\lambda)[u]=\langle\dot{\mathcal{L}}_\lambda u,u\rangle\geq 0$ for all $\lambda\in[a,b]$. It follows from the definition of the spectral flow in \eqref{def-sfl} that $\sfl(\mathcal{L},[a,b])=\sfl(\mathcal{L}^\delta,[a,b])$ for any sufficiently small $\delta>0$, where $\mathcal{L}^\delta=\{\mathcal{L}_\lambda+\delta I_H\}_{\lambda\in[a,b]}$ (see \cite[Lemma 2.1]{IJW}). By Theorem \ref{thm-sfl-crossings} (c) we now can choose such a $\delta>0$ in a way that moreover $\mathcal{L}^\delta$ only has regular crossings. Now $\Gamma(\mathcal{L}^\delta,\lambda)=\Gamma(\mathcal{L},\lambda)$ and as the latter is positive semi-definite and non-degenerate on every $\ker(\mathcal{L}_\lambda+\delta I_H)$, we finally see from \eqref{sfl-formula} that $\sfl(\mathcal{L},[a,b])=\sfl(\mathcal{L}^\delta,[a,b])\geq 0$. 
\end{proof}
\noindent
Now we firstly note that by the previous proposition

\[\sfl(M,[\lambda_-,\lambda_+])\geq 0\quad\text{and}\quad -\sfl(h(\cdot,\lambda_+),[0,1])\geq 0,\]
and thus by Theorem \ref{thm-bifJac} it is sufficient to show that $\sfl(h(\cdot,\lambda_-),[0,1])>0$. The path $h(\cdot,\lambda_-)$ is given by 

\[\langle h(s,\lambda_-)u,v\rangle=Q(u,v)+\int^{2\pi}_0\langle ((1-s)A_{\lambda_-}+sB_{\lambda_-})u(t),v(t)\rangle dt,\]
and thus 

\begin{align}\label{crossing1}
\Gamma(h(\cdot,\lambda_-),s)[u]=\int^{2\pi}_0{\langle (B_{\lambda_-}-A_{\lambda_-})u(t),u(t)\rangle dt}.
\end{align}
To proceed, we firstly need to make a brief digression and recall that crossing forms were originally introduced by Robbin and Salamon in \cite{Robbin-Salamon} for computing the spectral flow of paths of unbounded selfadjoint operators $\mathcal{A}=\{\mathcal{A}_\lambda\}_{\lambda\in[a,b]}$, where each $\mathcal{A}_\lambda$ is defined on a dense domain $D\subset H$ which is moreover supposed to be a Hilbert space in its own right that is compactly embedded into $H$. Theorem \ref{thm-sfl-crossings} verbatim holds in this setting (see \cite{Homoclinics}) and we now aim to apply it to the path of differential operators

\[\mathcal{A}_s:H^1(S^1,\mathbb{R}^{2n})\subset L^2(S^1,\mathbb{R}^{2n})\rightarrow L^2(S^1,\mathbb{R}^{2n}),\quad (\mathcal{A}_s u)(t)=J u'(t)+((1-s)A_{\lambda_-}+sB_{\lambda_-}) u(t). \]
As already used in the proof of Proposition \ref{prop-pos}, the spectral flow is invariant under perturbations by a sufficiently small positive multiple of the identity (see \cite[Lemma 2.1]{IJW}). Consequently, for almost every sufficiently small $\delta>0$, we have that $\sfl(\mathcal{A},[0,1])=\sfl(\mathcal{A}^\delta,[0,1])$, where the path $\mathcal{A}^\delta=\{\mathcal{A}_s+\delta I_H\}_{s\in[0,1]}$ has only regular crossings and its spectral flow is given by \eqref{sfl-formula}. As $B_{\lambda_-}=\beta_-I_{2n}$, we see that 

\begin{align}\label{crossing2}
\Gamma(\mathcal{A},s)=\Gamma(\mathcal{A}^\delta,s)=\int^{2\pi}_0{\langle (B_{\lambda_-}-A_{\lambda_-})u(t),u(t)\rangle dt}
\end{align}
is positive semi-definite. Now the kernel of $\mathcal{A}_s$ is given by the solutions of the boundary value problem

\[\left\{
\begin{aligned}
J u'(t)&+((1-s)A_{\lambda_-}+sB_{\lambda_-}) u(t)=0,\quad t\in [0,2\pi]\\
u(0)&=u(2\pi).
\end{aligned}
\right.\]
As $\alpha_-<0<\beta_-$ by assumption, we see that there has to be some $s\in(0,1)$ such that the matrix $(1-s)A_{\lambda_-}+sB_{\lambda_-}$ has a non-trivial kernel, which shows that the differential operator $\mathcal{A}_s$ has a non-trivial kernel by a constant solution. As $B_{\lambda_-}$ is a scalar multiple of the identity matrix $I_{2n}$ it follows that also $\mathcal{A}^\delta_s$ has a non-trivial kernel for any sufficiently small $\delta>0$ for some $s\in(0,1)$. This ultimately shows that 

\begin{align}\label{sflg0}
\sfl(\mathcal{A},[0,1])=\sfl(\mathcal{A}^\delta,[0,1])>0,
\end{align}
where $\delta>0$ is moreover chosen in a way such that $\mathcal{A}^\delta$ has only regular crossings.\\
Now we come back to the path $h(\cdot,\lambda_-)$ in our homotopy and firstly note that $h(0,\lambda_-)$ is invertible by the assumptions of Theorem \ref{thm-main}. The operator $h(1,\lambda_-)$ is invertible if and only if $\beta_-$ is not an integer. If $\beta_-$ is an integer, we consider instead $\beta_-+\varepsilon$ for an arbitrary $0<\varepsilon<1$ such that $\beta_-+\varepsilon<\alpha_+$. This does not affect the previous arguments and thus we can assume without loss of generality that the path $h(\cdot,\lambda_-)$ has invertible endpoints. By the homotopy invariance of the spectral flow $(S3)$, we have that $\sfl(h(\cdot,\lambda_-),[0,1])=\sfl(\tilde{h},[0,1])$, where

\[\langle \tilde{h}_su,v\rangle=Q(u,v)+\int^{2\pi}_0\langle ((1-s)A_{\lambda_-}+sB_{\lambda_-}+\delta I_{2n})u(t),v(t)\rangle dt\]
and $\delta>0$ is sufficiently small. Now $\ker(\tilde{h}_s)=\ker(\mathcal{A}^\delta_s)$ and $\Gamma(\tilde{h},s)=\Gamma(\mathcal{A}^\delta,s)$ for any $s\in[0,1]$. Thus if $\delta>0$ is suitably chosen such that $\mathcal{A}^\delta$ only has regular crossings, then this also is the case for $\tilde{h}$ and we finally obtain from \eqref{sfl-formula} that

\[\sfl(h(\cdot,\lambda_-),[0,1])=\sfl(\tilde{h},[0,1])=\sfl(\mathcal{A}^\delta,[0,1]),\]
which proves the first case in $(ii)$ by \eqref{sflg0}.\\
Now we assume that $\beta_+<\alpha_-$ and consider the path $N=\{N_\lambda\}_{\lambda\in[\lambda_-,\lambda_+]}$. As above, the homotopy  $h:[0,1]\times[\lambda_-,\lambda_+]\rightarrow\Phi^i_S(H)$ given by

\[\langle h(s,\lambda)u,v\rangle=Q(u,v)+\int^{2\pi}_0\langle ((1-s)A_\lambda(t)+sC_\lambda)u(t),v(t)\rangle dt\]
shows that

\begin{align}\label{compN}
\begin{split}
\sfl(L,[\lambda_-,\lambda_+])&=\sfl(h(\cdot,\lambda_-),[0,1])+\sfl(h(1,\cdot),[\lambda_-,\lambda_+])-\sfl(h(\cdot,\lambda_+),[0,1])\\
&=\sfl(h(\cdot,0),[0,\lambda_-])+\sfl(N,[\lambda_-,\lambda_+])-\sfl(h(\cdot,1),[0,\lambda_+]).
\end{split}
\end{align}
Now the rest of the argument is very similar to the case of $M$. By Proposition \ref{prop-pos} it follows that 

\[\sfl(N,[\lambda_-,\lambda_+])\leq 0\quad\text{and}\quad -\sfl(h(\cdot,\lambda_+),[0,1])\leq 0.\]
Moreover, using crossing forms, we verbatim see that $\sfl(h(\cdot,\lambda_-),[0,1])<0$ if $\alpha_-<0<\beta_-$. Consequently, $\sfl(L,[\lambda_-,\lambda_+])<\sfl(N,[\lambda_-,\lambda_+])\leq 0$, which again shows the existence of a bifurcation point by Theorem \ref{thm-bifJac}.\\ 
It remains to show $(iii)$ on global bifurcation. Our aim is to firstly obtain a general estimate for the spectral flow of $L$ in any dimension $2n$ of \eqref{equation}, which then yields the announced global bifurcation for planar systems. We consider once again the paths $M$ and $N$ defined in \eqref{MN}, but we adjust in two consecutive steps the previously assumed relations between the numbers $\alpha_\pm$ and $\beta_\pm$.  From the proof of (ii), we already know that $0\leq\sfl(M,[\lambda_-,\lambda_+])<\sfl(L,[\lambda_-,\lambda_+])$ if $\beta_-<\alpha_+$. The latter and \eqref{Delta} imply that now $\beta_+>\alpha_-$. If we modify $N$ by assuming that $\beta_+>\alpha_-$ and repeat the argument in $(ii)$, then the only change in \eqref{compN} is that now $\sfl(N,[\lambda_-,\lambda_+])\geq 0$. Thus we obtain from \eqref{sfldelta} that

\[0\leq\sfl(M,[\lambda_-,\lambda_+])=2n\Delta(\beta_-,\alpha_+)<\sfl(L,[\lambda_-,\lambda_+])<2n\Delta(\alpha_-,\beta_+)=\sfl(N,[\lambda_-,\lambda_+]).\]  
If $n=1$, i.e. the system \eqref{equation} is planar, and $\Delta(\alpha_-,\beta_+)-\Delta(\beta_-,\alpha_+,)=1$ as required in \eqref{Delta}, then $\sfl(L,[\lambda_-,\lambda_+])$ is odd, which implies the existence of a global bifurcation from the interval $[\lambda_-,\lambda_+]$ by Theorem \ref{thm-globalbif}.\\
In a second step, we keep the original assumption $\beta_+<\alpha_-$ on $N$, but now modify $M$ by assuming that $\alpha_+> \beta_-$. If we repeat the above argument, this yields

\[\sfl(M,[\lambda_-,\lambda_+])=2n\Delta(\alpha_+,\beta_-)<\sfl(L,[\lambda_-,\lambda_+])<2n\Delta(\beta_+,\alpha_-)=\sfl(N,[\lambda_-,\lambda_+])\leq 0.\]      
Thus again we obtain in the planar case a global bifurcation if $\Delta(\beta_+,\alpha_-)-\Delta(\alpha_+,\beta_-)=1$. This finishes the proof of (iii).


\thebibliography{99}


\bibitem{AtiyahSinger} M.F. Atiyah, I.M. Singer, \textbf{Index Theory for skew--adjoint Fredholm operators}, Inst. Hautes Etudes Sci. Publ. Math. \textbf{37}, 1969, 5--26 

\bibitem{AtiyahPatodi} M.F. Atiyah, V.K. Patodi, I.M. Singer, \textbf{Spectral Asymmetry and Riemannian Geometry III}, Proc. Cambridge Philos. Soc. \textbf{79}, 1976, 71--99



\bibitem{Benevieri} P. Amster, P. Benevieri, J. Haddad, \textbf{A global bifurcation theorem for critical values in Banach spaces}, Ann. Mat. Pura Appl. \textbf{198}, 2019, 773--794


\bibitem{Bartsch} T. Bartsch, A. Szulkin, \textbf{Hamiltonian systems: periodic and homoclinic solutions by variational methods}, Handbook of differential equations: ordinary differential equations. Vol. II, 
 77--146, Elsevier B. V., Amsterdam,  2005

\bibitem{Boehme} R. B\"ohme, \textbf{Die L\"osung der Verzweigungsgleichungen f\"ur nichtlineare Eigenwertprobleme}, (German)  Math. Z.  \textbf{127}, 1972, 105--126

\bibitem{book} N. Doll, H. Schulz-Baldes, N. Waterstraat, \textbf{Spectral Flow - A functional analytic and index-theoretic approach}, De Gruyter Studies in Mathematics \textbf{94}, De Gruyter, Berlin, 2023


\bibitem{Mike} P.M. Fitzpatrick, J. Pejsachowicz, \textbf{The fundamental group of the space of linear Fredholm operators and the global analysis of semilinear equations}, Fixed point theory and its applications (Berkeley, CA, 1986), 47--87, Contemp. Math. \textbf{72}, Amer. Math. Soc., Providence, RI,  1988

\bibitem{Memoirs} P.M. Fitzpatrick, J. Pejsachowicz, \textbf{Orientation and the Leray-Schauder theory for fully nonlinear elliptic boundary value problems}, Mem. Amer. Math. Soc. \textbf{101}, 1993,  no. 483

\bibitem{Specflow} P.M. Fitzpatrick, J. Pejsachowicz, L. Recht, \textbf{Spectral Flow and Bifurcation of Critical Points of Strongly-Indefinite Functionals-Part I: General Theory}, Journal of Functional Analysis \textbf{162}, 1999, 52--95

\bibitem{SFLPejsachowiczII} P.M. Fitzpatrick, J. Pejsachowicz, L. Recht, \textbf{Spectral Flow and Bifurcation of Critical Points of Strongly-Indefinite Functionals Part II: Bifurcation of Periodic Orbits of Hamiltonian Systems}, J. Differential Equations \textbf{163}, 2000, 18--40





\bibitem{IJW} M. Izydorek, J. Janczewska, N. Waterstraat, \textbf{The Maslov index and the spectral flow - revisited},  Fixed Point Theory Appl. \textbf{5}, 2019


\bibitem{Lesch} M. Lesch, \textbf{The uniqueness of the spectral flow on spaces of unbounded self-adjoint Fredholm operators}, Spectral geometry of manifolds with boundary and decomposition of manifolds, 193--224, Contemp. Math., 366, Amer. Math. Soc., Providence, RI,  2005

\bibitem{Mawhin} J. Mawhin, M. Willem, \textbf{Critical point theory and Hamiltonian systems}, Applied Mathematical Sciences \textbf{74}, Springer-Verlag, New York,  1989

\bibitem{Rabier} J. Pejsachowicz, P.J. Rabier, \textbf{Degree theory for $C^1$-Fredholm mappings of index 0},
 J. Anal. Math. \textbf{76}, 1998, 289--319.

\bibitem{BifJac} J. Pejsachowicz, N. Waterstraat,
\textbf{Bifurcation of critical points for continuous families of $C^2$ functionals of Fredholm type}, J. Fixed Point Theory Appl. \textbf{13}, 2013, 537--560

\bibitem{Phillips} J. Phillips, \textbf{Self-adjoint Fredholm Operators and Spectral Flow}, Canad. Math. Bull. \textbf{39}, 1996, 460--467

\bibitem{Putnam} C.R. Putnam, A. Wintner, \textbf{The connectedness of the orthogonal group in Hilbert space}
 Proc. Nat. Acad. Sci. U.S.A. \textbf{37}, 1951, 110--112



\bibitem{Rabinowitz} P.H. Rabinowitz, \textbf{Minimax methods in critical point theory with applications to differential equations}, CBMS Regional Conference Series in Mathematics \textbf{65}, 1986 


\bibitem{Robbin-Salamon} J. Robbin, D. Salamon, \textbf{The spectral flow and the {M}aslov index}, Bull. London Math. Soc. {\bf 27}, 1995, 1--33


\bibitem{RobertIndBundle} R. Skiba, N. Waterstraat, \textbf{The Index Bundle for Selfadjoint Fredholm Operators and Multiparameter Bifurcation for Hamiltonian Systems}, Z. Anal. Anwend. \textbf{41}, 2023, 487--501 
  

\bibitem{CompSfl}  M. Starostka, N. Waterstraat, \textbf{On a Comparison Principle and the Uniqueness of Spectral Flow}, Math. Nachr. \textbf{295}, 785--805 


\bibitem{CalcVar} N. Waterstraat, \textbf{A family index theorem for periodic Hamiltonian systems and bifurcation}, Calc. Var. Partial Differential Equations  \textbf{52}, 2015, 727--753

\bibitem{Homoclinics} N. Waterstraat, \textbf{Spectral flow, crossing forms and homoclinics of Hamiltonian systems}, Proc. Lond. Math. Soc. (3) \textbf{111}, 2015, 275--304

\bibitem{Fredholm} N. Waterstraat, \textbf{Fredholm Operators and Spectral Flow}, Rend. Semin. Mat. Univ. Politec. Torino \textbf{75}, 2017, 7--51


\vspace*{1.3cm}

\begin{minipage}{1.2\textwidth}
\begin{minipage}{0.4\textwidth}

Joanna Janczewska\\
Institute of Applied Mathematics\\
Faculty of Applied Physics and Mathematics\\
Gda\'{n}sk University of Technology\\
Narutowicza 11/12, 80-233 Gda\'{n}sk, Poland\\
joanna.janczewska@pg.edu.pl\\\\\\
Maciej Starostka\\
Institute of Applied Mathematics\\
Faculty of Applied Physics and Mathematics\\
Gda\'{n}sk University of Technology\\
Narutowicza 11/12, 80-233 Gda\'{n}sk, Poland\\
maciej.starostka@pg.edu.pl\\\\

\end{minipage}
\hfill
\begin{minipage}{0.6\textwidth}

Nils Waterstraat\\
Martin-Luther-Universit\"at Halle-Wittenberg\\
Naturwissenschaftliche Fakult\"at II\\
Institut f\"ur Mathematik\\
06099 Halle (Saale)\\
Germany\\
nils.waterstraat@mathematik.uni-halle.de
\end{minipage}
\end{minipage}

\end{document}